\documentclass[pdflatex,sn-mathphys-num]{sn-jnl}
\usepackage[english]{babel} 
\usepackage[utf8]{inputenc} 

\usepackage{graphicx}%
\usepackage{multirow}%
\usepackage{amsmath,amssymb,amsfonts}%
\usepackage{amsthm}%
\usepackage{mathrsfs}%
\usepackage[title]{appendix}%
\usepackage{xcolor}%
\usepackage{textcomp}%
\usepackage{manyfoot}%
\usepackage{booktabs}%
\usepackage{algorithm}%
\usepackage{algorithmicx}%
\usepackage{algpseudocode}%
\usepackage{listings}%

\usepackage{psfrag}
\usepackage{verbatim}
\usepackage{multirow}
\usepackage{bm}
\usepackage{float}
\usepackage{subfigure}
\usepackage{extarrows}
\usepackage{booktabs}
\usepackage{appendix}
\usepackage[symbol]{footmisc}
\usepackage{enumerate}
\usepackage{latexsym}
\usepackage{color}
\usepackage{epstopdf}
\usepackage{diagbox}
\usepackage[symbol]{footmisc}

\newtheorem{lemma}{Lemma}[section]
\newtheorem{theorem}{Theorem}[section]
\newtheorem{Assumption}{Assumption}[section]

\theoremstyle{remark}
\newtheorem{remark}{Remark}[section]
\numberwithin{equation}{section}

\newcommand{\ba}{\begin{array}}\newcommand{\ea}{\end{array}}
\newcommand{\be}{\begin{eqnarray}}
	\newcommand{\ee}{\end{eqnarray}}
\newcommand{\beq}{\begin{equation}}\newcommand{\eeq}{\end{equation}}
\newcommand{\bex}{\begin{eqnarray*}}
	\newcommand{\eex}{\end{eqnarray*}}
\def\L{\mathcal L}
\def\Dt{\Delta t}

\begin{document}

\title[A randomized Milstein--Galerkin Finite Element method for SEEs]{A Drift-Randomized Milstein--Galerkin Finite Element Method for Semilinear Stochastic  Evolution Equations}


\author[1]{\fnm{Xiao} \sur{Qi}}\email{qixiao@jhun.edu.cn}

\author[2]{\fnm{Yue} \sur{Wu}}\email{yue.wu@strath.ac.uk}


\author*[3]{\fnm{Yubin} \sur{Yan}}\email{y.yan@chester.ac.uk}


\affil[1]{\orgdiv{School of Artificial Intelligence}, \orgname{Jianghan University}, \orgaddress{\street{No. 8 Sanjiaohu Road, Economic Development Zone}, \city{Wuhan}, \postcode{430056}, \state{Hubei}, \country{China}}}

\affil[2]{\orgdiv{Department of Mathematics and Statistics}, \orgname{University of Strathclyde}, 
		\city{Glasgow}, \postcode{G1 1XH}, 
		\country{UK}}
	

\affil*[3]{\orgdiv{School of Computer and Engineering Sciences}, \orgname{University of Chester}, \orgaddress{\street{Parkgate Road}, \city{Chester}, \postcode{CH1 4BJ}, 
		\country{UK}}}




\abstract{\quad Kruse and Wu [{\it Math. Comp.} {\bf 88} (2019) 2793--2825] proposed a fully discrete randomized Galerkin finite element method for semilinear stochastic evolution equations (SEEs) driven by additive noise and showed that this method attains a temporal strong convergence rate exceeding order $\frac{1}{2}$ without imposing any differentiability assumptions on the drift nonlinearity. They further discussed a potential extension of the randomized method to SEEs with multiplicative noise and introduced the so-called \textit{drift-randomized Milstein--Galerkin finite element} fully discrete scheme, but without providing a corresponding strong convergence analysis. 
	
\quad This paper aims to fill this gap by  rigorously analyzing the strong convergence behavior of the {\it drift-randomized Milstein-Galerkin finite element scheme}. By avoiding the use of differentiability assumptions on the nonlinear drift term, we establish strong convergence rates in both space and time for the proposed method. The obtained temporal convergence rate is $\mathcal{O}(\Dt^{1-\varepsilon_0})$, where $\Dt$ denotes the time step size and $\varepsilon_0$ is an arbitrarily small positive number. Numerical experiments are reported to validate the theoretical findings.}

\keywords{Semilinear stochastic evolution equations, Multiplicative noise,  Drift-randomized Milstein method, Finite element method, Strong convergence}


\pacs[2020 MSC Classification]{65C30, 60H35, 60H15, 65M60}

\maketitle

\section{Introduction}
\label{Intro}
\numberwithin{equation}{section}
Let $T\in(0,\infty)$, and let $D\subset\mathbb{R}^d$, $d\in\{1,2,3\}$, be a bounded, convex polygonal or polyhedral  spatial domain with  boundary $\partial D$. Define $H:=L^2(D)$ as the separable Hilbert space equipped with inner product $(\cdot,\cdot)_H$ and norm $\|\cdot\|_{H}$. $W(t)$ Denote by $(\Omega_{W},\mathcal{F}^W,\{\mathcal{F}^W_{t}\}_{0\leq t\leq T},\mathbb{P}_W)$ a filtered probability space satisfying the usual conditions. Let $W(t)$ be an $\mathcal{F}^W_{t}$-adapted $H$-valued Wiener process with covariance operator $Q$, where $Q$ is a symmetric, positive-definite, trace-class operator, with  orthonormal eigenfunctions $\{\phi_j\in H: j\in\mathbb{N_+}\}$ and corresponding positive eigenvalues $\{q_j, j\in\mathbb{N_+}\}$. Precisely, $W(t)$ admits the Karhunen-Lo\`eve expansion:
\be\label{K-Lexpansion}
W(t)=\sum_{j=1}^{\infty}\sqrt{q_j}\phi_j\beta_j(t),
\ee
with $\beta_j(t)$ representing the independent and identically distributed standard Brownian motions.

Denote by $A: \operatorname{dom}(A) \subset H\to H$ a self-adjoint, positive definite, second order linear elliptic operator with compact inverse and zero Dirichlet boundary condition. For $s\in\mathbb{R}$, define the Hilbert space $\dot{H}^s:=\operatorname{dom}(A^{\frac{s}{2}})$ as the domain of the fractional power of the operator $A$, equipped with the norm 
\bex \|v\|_{\dot{H}^s}:=\|A^{\frac{s}{2}}v\|_{H},\ \ \forall v\in\dot{H}^s.
\eex
Let $L^{p}(\Omega_W,\dot{H}^s)$, $p\ge2$, denote the space of $\dot{H}^s$-valued random variables that are $p$-fold integrable, endowed with the norm
\be\label{LpOmega}
\| v\|_{{L^{p}(\Omega_W, \dot{H}^s)}}:=\big(\mathbb{E}_W[\| v \|_{{\dot{H}^s}}^{p}]\big)^{\frac{1}{p}}<\infty,\ \ \forall v\in L^{p}(\Omega_W,\dot{H}^s).   
\ee
Here, $\mathbb{E}_W[\cdot]$ denotes the expectation with respect to the probability measure $\mathbb{P}_W$.

Let's consider the semilinear stochastic evolution equations (SEEs) driven by multiplicative noise:
\begin{equation}\label{underlyingproblem}
	\begin{aligned}
		\mathrm{d}u(t)&=\big(-Au+F(u(t))\big)\mathrm{d}t+G(u(t))\mathrm{d}W(t),\    t\in(0,T],\\
		 u(0)&=u_0,
	\end{aligned}
\end{equation}
where the initial value $u_0:\Omega_{W}\to H$ satisfies $u_0\in L^p(\Omega_{W}, \dot{H}^{1+r})$ for $p\ge 2$ and $r\in[0,1]$. The drift term $F(u)$ is assumed to satisfy the global Lipschitz and linear growth conditions, specifically, there exists a constant $c>0$ such that
\begin{equation}\label{Fcondition}
	\begin{aligned}
		&\|F(v_1)-F(v_2)\|_{H}\leq c\|v_1-v_2\|_{H},\ \ \forall v_1, v_2\in H,\\
		&\|F(v)\|_{H}\leq c(1+\|v\|_{H}),\ \ \forall v\in H.
	\end{aligned}
\end{equation}
Throughout this paper, the symbol $c$, with or without subscripts, denotes a generic positive constant, whose value may not be the same at different occurrences. The nonlinearty $G(\cdot)$ represents the diffusion term, and additional assumptions on  $G(\cdot)$  are specified in Section \ref{sec2}. 

Our objective is to approximate the mild solution of the stochastic evolution problem \eqref{underlyingproblem}. According to the variation-of-constants formula, the mild solution (cf. \cite[Definition 10.22]{LORD2014}) admits the representation
\begin{equation}\label{for17}
	u(t)=S(t)u_0+\int_{0}^{t}S(t-\tau)F(u(\tau))d\tau+\int_{0}^{t} S(t-\tau)G(u(\tau)) dW(\tau),
\end{equation}
where $S(t)=e^{-tA}$ denotes the analytic semigroup generated by the infinitesimal generator $-A$. 

Several standard numerical methods for approximating the solution of \eqref{underlyingproblem} are discussed in the literature \cite{jentzen2011taylor, kruse2014strong, LORD2014, zhang2017numerical,cui2022density,cui2025stochastic,chen2023accelerated} and the references therein. Due to the presence of stochastic noise, classical Maruyama-type methods typically yield low strong temporal convergence rates, often limited to order $\frac{1}{2}$. A widely used approach to improve accuracy involves constructing higher-order schemes based on It{\^o}-Taylor expansions  \cite{jentzen2011taylor}. Under sufficient smoothness of the coefficient functions, such expansions theoretically enable the derivation of schemes with arbitrarily high temporal order. However, these high-order methods may exhibit numerical instability and generally require the evaluation of higher-order derivatives and simulation of iterated stochastic integrals, which substantially restrict their practicality. More critically, the regularity assumptions required for such constructions are often too stringent in infinite-dimensional settings. For example, \cite[Chapter 8]{jentzen2011taylor} assumes that the drift function $F$ belong to $C_b^{\infty}(H,H)$, i.e., it is infinitely Fr{\'e}chet differentiable with bounded derivatives---an assumption rarely satisfied for genuinely nonlinear Nemytskii-type operators. To remove this limitation, several works, including \cite{jentzen2015milstein, kruse2014consistency, leonhard2018enhancing, wang2015exponential, wang2017strong}, propose relaxing the smoothness requirements to  $F\in C^2_b(\dot{H}^s, H)$ or $F\in C^2_b(H,\dot{H}^{-s})$ for $s\in(0,1]$. Although this relaxation broadens the applicability of higher-order methods such as Milstein and Wagner-Platen schemes \cite{becker2016exponential, barth2013lp, barth2012milstein, liu2021strong,wang2024strong2,liu2023strong}, these methods still fundamentally rely on the Fr{\'e}chet differentiability of the drift term.

In recent years, a high-order drift-randomized Milstein time discretization method, initially proposed in \cite{kruse2019randomized1}, has attracted significant attention due to its ability to establish strong convergence results without relying on Taylor expansions or derivative information of the nonlinear drift term. A number of extensions and variants of this drift-randomization strategy have been developed for stochastic ordinary differential equations (SODEs); see, e.g., \cite{bao2025randomised, przy2024approximation, biswas2024explicit, przy2024randomized, morkisz2021randomized, vashistha2025first, liao2022truncated} and the references therein. In contrast, the application of drift-randomized methods to stochastic partial differential equations (SPDEs) remains relatively scarce. Kruse and Wu \cite{kruse2019randomized} introduced a fully discrete scheme that combines a standard Galerkin finite element method with a randomized Runge-Kutta time discrete approach for SPDEs with additive noise. Remarkably, the authors demonstrated that the temporal strong convergence rate of the proposed scheme matches that of the classical semi-implicit Milstein--Galerkin method \cite{kruse2014consistency}, while avoiding any differentiability assumptions on the drift nonlinearity. Moreover, in \cite[Remark 5.11]{kruse2019randomized}, the authors discussed a possible extension of the randomized approach to SEEs with multiplicative noise, and introduced the corresponding scheme, referred to as the \textit{drift-randomized Milstein--Galerkin finite element method}. However, no strong convergence analysis was provided for this scheme in the multiplicative noise setting. To the best of our knowledge, such an analysis has not yet available in the existing literature. The current work aims to fill this gap by providing a detailed strong convergence analysis of this \textit{drift-randomized Milstein--Galerkin finite element method}.

For convenience, we henceforth abbreviate the \textit{drift-randomized Milstein--Galerkin finite element} method as DRMGFE. Now we describe the DRMGFE scheme in detail. Let $\Delta t := T/N$ denote a uniform time step for a positive integer  $N$, and define $t_n:=n\Dt$ for $n=1,\dots,N$. Let $\mathcal{T}_h$, with $h\in(0,1)$, denote a shape-regular triangulation of the spatial domain $D$. Define finite element space $V_h$ by
\bex
V_h:=\lbrace v \in C^0(\bar{D}),\ v=0 \ \mbox{on} \ \partial D, \  v|_{K}\in {\color{blue} \mathscr{P}_1(K)}\ \mbox{for all} \ K\in\mathcal{T}_h \rbrace \subset \dot{H}^1,
\eex
where {\color{blue}$\mathscr{P}_1(K)$} denotes the set of polynomials of degree at most one on each element $K$. Let $\mathcal{P}_h: H \to V_h$ be the $L^2$-orthogonal projection onto $V_h$, and define $A_h: V_h \to V_h$ as the discrete version of the unbounded operator $A$. For example, if $A := -\Delta$ is the negative Laplacian subject to homogeneous Dirichlet boundary conditions, then $A_h$ is defined by $(A_hw,v):=(\nabla w, \nabla v),\  \forall w,v\in V_h$. 

Let sequence $(\xi_n)_{n\in\{1,\dots,N\}}$ denote the independent family of  $\mathcal{U}(0, 1)$-distributed random variables defined on another probability space $(\Omega_{\xi},\mathcal{F}^{\xi}, \{\mathcal{F}^{\xi}_t\}_{0\leq t\leq T},\mathbb{P_\xi})$. Note that $(\xi_n)_{n\in\{1,\dots,N\}}$ induces a natural filtration $\{\mathcal{F}^{\xi}_n\}_{n\in\{0,1,\dots,N\}}$ on the probability space $(\Omega_{\xi},\mathcal{F}^{\xi},\mathbb{P_\xi})$ by setting $\mathcal{F}^{\xi}_0:=\{\emptyset,\Omega_{\xi}\}$ and $\mathcal{F}^{\xi}_n:=\sigma\{\xi_j: j=1,\dots, n\}$. Here $\sigma\{\xi_j: j=1,\dots, n\}$ denotes the $\sigma$-algebra generated by collection of random variables  $(\xi_j)_{j\in\{1,\dots, n\}}$. Define the product probability space
\bex
(\Omega,\mathcal{F},\mathbb{P}):=(\Omega_{W}\times\Omega_{\xi},\mathcal{F}^W\times\mathcal{F}^{\xi},\mathbb{P}_W\times\mathbb{P_\xi}),
\eex
with the corresponding expectations denoted by $\mathbb{E}_W$ and $\mathbb{E}_\xi$, respectively. Let 
\bex
\pi_{\Dt}:=\{t_n=n\Dt: n=0,1,\dots,N\}
\eex
denote the set of temporal grid points. We define the discrete-time filtration  $\{\mathcal{F}^{\pi_{\Dt}}_n\}_{n\in\{0,\dots,N\}}$ on $(\Omega,\mathcal{F},\mathbb{P})$ by setting  
\bex
\mathcal{F}^{\pi_{\Dt}}_n:=\mathcal{F}^W_{t_n}\times\mathcal{F}^{\xi}_n, \ \ \mbox{for}\  n\in\{0,1,\dots,N\}.
\eex

The proposed DRMGFE scheme is given via the following recursion: for $n=1,\dots,N$, 
\begin{equation}\label{randomizedMilstein}
	\begin{aligned}
		u_{h,\xi_n}^n&=S_{h,\xi_n\Dt}\big(u_h^{n-1}+\xi_n\Dt F(u_h^{n-1})+G(u_h^{n-1})\Delta W^{n-1}_{\xi_n}\big),\\
		u_h^n&=S_{h,\Dt}\big(u_h^{n-1}+\Dt F(u_{h,\xi_n}^n)+\int_{t_{n-1}}^{t_n}\mathscr{G}(\tau,u_h^{n-1})dW(\tau)\big),\\
		u_h^0&=\mathcal{P}_h u_0,
	\end{aligned}
\end{equation}
where $u_h^{n}$ denotes the fully discrete approximation to $u(t_n)$, with $t_n=n\Dt$. The operators 
\begin{equation}\label{Shdt}
S_{h,\Dt}:=(I+\Dt A_h)^{-1}\mathcal{P}_h \ \ \mbox{and} \ \  S_{h,\xi_n\Dt}:=(I+\xi_n\Dt A_h)^{-1}\mathcal{P}_h
\end{equation}
represent the resolvents at deterministic and randomized time steps, respectively. The intermediate value  $u_{h,\xi_n}^n$ serves as 
an approximation to the solution at the random time point  $t_{n,\xi_n}:=t_{n-1}+\xi_n\Dt$, where each $t_{n,\xi_n}\sim\mathcal{U}(t_{n-1}, t_n)$ independently. Moreover, the increment $\Delta W^{n-1}_{\xi_n}$ and the process $\mathscr{G}(\tau,u_h^{n-1})$ are given by
\begin{equation*}
	\begin{aligned}
\Delta W^{n-1}_{\xi_n}&:=W(t_{n-1}+\xi_n\Dt)-W(t_{n-1}),\\ \mathscr{G}(\tau,u_h^{n-1})&:=G(u_h^{n-1})+G'(u_h^{n-1})G(u_h^{n-1})(W(\tau)-W(t_{n-1})),
	\end{aligned}
\end{equation*}
for $t_{n-1}\leq\tau\leq t_{n}$. In addition, it is worth noting that the time discretization used in \cite{kruse2019randomized} for additive noise-driven SEEs corresponds to the Maruyama-type scheme. In the additive noise case, the Maruyama-type scheme can be viewed as a special instance of the Milstein scheme, and therefore its temporal strong convergence order coincides with that of the Milstein method (see, e.g., \cite[Chapter 10]{LORD2014}). By contrast, in the multiplicative noise setting, the strong convergence analysis of the scheme \eqref{randomizedMilstein} becomes substantially more involved. In particular, additional assumptions on the diffusion coefficient $G(\cdot)$ and its derivatives are required, and the presence of the stochastic correction term appeared in $\mathscr{G}(\cdot)$ further complicates the error analysis when compared with the additive noise case.

Indeed, the time discretization in \eqref{randomizedMilstein} can be regarded as a two-stage Runge--Kutta scheme, where the second stage introduces a randomized node in the drift term $F(\cdot)$. Similar node-randomized Runge--Kutta methods have been successfully applied to the numerical solution of complex deterministic differential equations; see, e.g., \cite{hofmanova2020randomized, eisenmann2019randomized} and references therein. However, there has been limited work on combining the randomized Runge--Kutta approach with finite element discretization to construct fully discrete scheme for SEEs and to rigorously analyze its strong convergence property; see, e.g., \cite{kruse2019randomized}. To the best of our knowledge, the present work seems the first attempt to investigate the strong convergence behavior of the fully discrete scheme \eqref{randomizedMilstein}. Notably, when $\xi_n = 0$ for all $n \in \{1, \dots, N\}$, the scheme \eqref{randomizedMilstein} reduces to the classical semi-implicit Milstein--Galerkin finite element scheme, as studied in \cite{kruse2014consistency,kruse2014strong} and references therein. In contrast to classical convergence analyses, which typically rely on Taylor expansions and require differentiability assumptions on the nonlinear drift term $F(\cdot)$, the introduction of the randomized internal stage $u_{h,\xi_n}^n$ allows us to establish high-order temporal strong convergence rates without imposing such assumptions for $F$. This is achieved via the use of martingale inequalities in the error analysis of the drift integral (see, e.g., \cite[Lemma 5.7]{kruse2019randomized}).

The main contribution of this work lies in establishing strong convergence rates for the fully discrete scheme \eqref{randomizedMilstein} in both time and space. In particular, without imposing any differentiability assumptions on the nonlinear drift term, we prove that the numerical solution $u_h^n$ converges strongly to the mild solution $u(t_n)$ with rate $\mathcal{O}(\Delta t^{1 - \varepsilon_0} + h^{2 - \varepsilon_0})$ for all $n = 1, \dots, N$, where $\Delta t$ and $h$ denote the temporal and spatial mesh sizes, respectively, and $\varepsilon_0 > 0$ is arbitrarily small. This result aligns with the classical estimate $\mathcal{O}(\Delta t^{\frac{1+r}{2}} + h^{1+r})$ for the semi-implicit Milstein--Galerkin finite element method when $r \to 1$, as reported in \cite[Theorem 1.1]{kruse2014consistency}.

The remainder of the paper is organized as follows. In Section \ref{sec2}, we introduce the basic notations and preliminaries used throughout the paper. Section \ref{convergenceanalysis} presents a detailed analysis of the strong convergence rates for the fully discrete scheme \eqref{randomizedMilstein}. Finally, Section \ref{numericalresult} reports numerical experiments that confirm the theoretical results and demonstrate the efficiency of the proposed scheme.

\section{Preliminaries}\label{sec2}
We start by introducing the notations that will be used throughout the paper.  Let $\mathcal{L}(H)$ denote the space of bounded linear operators from $H$ to itself. The Cameron-Martin space associated with the covariance operator $Q$ is given by $Q^{\frac{1}{2}}(H) := \{ Q^{\frac{1}{2}}v : v \in H \}$. Define the space ${\L_0^2}:=\{L: Q^{\frac{1}{2}}(H)\to H\}$ as the collection of Hilbert-Schmidt operators from $Q^{\frac{1}{2}}(H)$ to $H$ \cite{yan2005galerkin}, equipped with norm
\be\label{HilbertHimitnorm}
\| L \|_{{\L_0^2}}:=\Big(\sum_{j=1}^\infty \big\| L Q^\frac{1}{2}\phi_j \big\|_{{H}}^2\Big)^\frac{1}{2}<\infty,\ \ \forall L\in{\L_0^2}.
\ee
Additionally, let $HS(Q^{\frac{1}{2}}(H),\L_0^2)$ denote another Hilbert-Schmidt operator space equipped with the norm
\be\label{anotherHilbert}
\|L\|_{HS(Q^{\frac{1}{2}}(H),\L_0^2)}:=\Big(\sum_{i=1}^{\infty}\sum_{j=1}^{\infty}q_iq_j\big\|L\phi_i\phi_j\big\|^2_{H}\Big)^{\frac{1}{2}}<\infty,\ \ \forall L\in HS(Q^{\frac{1}{2}}(H),\L_0^2).
\ee
Let $\L(H,\L^2_0)$ denote the space of bounded linear operators from $H$ to $\L_0^2$, endowed with the norm  
\bex\|L\|_{\L(H,\L_0^2)}:=\sup\limits_{v\in H}\frac{\|L v\|_{{\L_0^2}}}{\| v\|_{{H}}}<\infty, \ \ \forall L\in\L(H,\L_0^2).
\eex 
Moreover, define the tensor product operator space $\L^{\otimes2}(H,\L_0^2):=\L(H\otimes H,\L_0^2)$ with the norm
\bex
\| L \|_{{\L^{\otimes2}(H,\L_0^2)}}:=\sup\limits_{v_1,v_2\in H}\frac{\|L v_1v_2\|_{{\L_0^2}}}{\| v_1\|_{{H}}\| v_2\|_{{H}}}<\infty,\ \ \forall L\in\L^{\otimes2}(H,\L_0^2).
\eex
To establish the well-posedness of the problem \eqref{underlyingproblem} and to facilitate the subsequent error analysis of the proposed numerical scheme, we impose the following assumptions on the nonlinear diffusion term $G(\cdot)$.
\begin{Assumption}\label{Gassumption}
	Suppose that the nonlinear mapping $G(\cdot): H\to\L_0^2$ satisfies a commutativity-type condition as described in \cite[Assumption 3, Remark 1]{jentzen2015milstein} \footnote[2]{This commutativity condition is naturally satisfied by a broad class of SPDEs; see, e.g., \cite[Section 4]{jentzen2015milstein}.},  and 
		\begin{equation}\label{lipofG}
					\big\|G(v_1)-G(v_2)\big\|_{{\L_0^2}}
			\leq c\big\|v_1-v_2\big\|_{H},\ \ \forall v_1,v_2\in H.
		\end{equation}
		 Furthermore, for each $s\in[0,\frac{1}{2}]$, suppose that the mapping	$A^sG(\cdot):\operatorname{dom}(A^s)\to {\L_0^2} $ satisfies the growth condition
	\begin{align}\label{linearofG}
		\| A^{s}G(v)\|_{{{\L_0^2}}}\leq c\big(1+\big\|A^s v\big\|_{H}\big),\ \ \forall v\in \operatorname{dom}(A^s).
	\end{align}
	Additionally, assume that the first and second Fr{\'e}chet derivatives of $G$ satisfy:  $G'(v)\in\L(H,\L_0^2)$ and $G''(v)\in\L^{\otimes2}(H,\L_0^2)$ for $v\in H$, and that there exists a constant $c>0$ such that
	\be\label{deriveg1}
	\|G'(v)\|_{{\L(H,\L_0^2)}}+\|G''(v)\|_{{\L^{\otimes2}(H,\L_0^2)}}\leq c,\ \ \forall v\in H.
	\ee
	Moreover, assume that $G'(v)G(v)\in HS(Q^{\frac{1}{2}}(H),\L_0^2)$ for $v\in H$, and that
	\begin{equation}\label{deriveg2}
		\|G'(v_1)G(v_1)-G'(v_2)G(v_2)\|_{HS(Q^{\frac{1}{2}}(H),\L_0^2)} \leq c\|v_1-v_2\|_{{H}},\ \ \forall v_1,v_2\in H.
	\end{equation}
\end{Assumption}
\begin{remark}
	The conditions \eqref{lipofG} and \eqref{linearofG} are standard in the numerical analysis of SEEs with multiplicative noise; see, e.g., \cite[Assumption 2.2]{kruse2014optimal}. The derivative-based assumptions \eqref{deriveg1} and \eqref{deriveg2} were employed in \cite[Assumption 5.2]{liu2021strong} in the analysis of Milstein-type schemes, where specific examples of nonlinearities $G(\cdot)$ satisfying these conditions can be found in \cite[Example 5.1]{liu2021strong}.
\end{remark}
Under assumptions \eqref{Fcondition}, \eqref{lipofG}, and \eqref{linearofG}, the existence and uniqueness of a mild solution $u: [0,T] \times \Omega_W \to H$ to \eqref{underlyingproblem} can be established, see, e.g., \cite[Theorem 1]{jentzen2012regularity}. Furthermore, by the regularity results in \cite[Theorem 2.27, Theorem 2.31]{kruse2014strong}, it holds true that for $p\ge 2$, $\gamma\in[0,1)$,
	\begin{align}
		&\sup_{0\leq t\leq T}\mathbb{E}_W\big[\|u(t)\|_{\dot{H}^{1+\gamma}}^p\big]<\infty,\label{uregularity}\\
		&\|u(t_1)-u(t_2)\|_{L^{p}(\Omega_{W},\dot{H}^\gamma)}\leq c(t_1-t_2)^{\frac{1}{2}}, \ \ 0\leq t_1\leq t_2\leq T. \label{holderregularity}
	\end{align}
We also recall several standard smoothing estimates for the semigroup operators $S(t)$ and its discrete counterpart $S_{h,\Dt}$, which are used in the subsequent analysis. For $m\in\{1,\dots,N\}$, define $S_{h,\Dt}^m:=(I+\Dt A_h)^{-m}\mathcal{P}_h$. The following bounds hold:
\begin{align}
	\bullet \ \	&	\sup\limits_{\Dt\in(0,T)}\sup\limits_{h\in(0,1)}\|S_{h,\Dt}\|_{\L(H)}\leq 1,\ \   \sup\limits_{\Dt\in(0,T)}\sup\limits_{h\in(0,1)}\sup\limits_{m\in\{1,\dots,N\}}\|S_{h,\Dt}^m\|_{\L(H)}\leq 1.  \label{operatoruniformbound}  \\
	\bullet \ \	&\| A_h^{\rho}S^j_{h,\Dt}v\|_{H}=\|A_h^{\rho}(I+\Dt A_h)^{-j}\mathcal{P}_hv\|_H\leq ct_j^{-\rho}\| v\|_H ,\ \ \forall v\in H,\ \  j\in\{1,\dots,N\},\ \ \rho\ge0.\label{nond-operatoresti}\\
	\bullet \ \	&\mbox{For}\ 0\leq\mu\leq\rho\leq2,\  v\in \dot{H}^{\mu}, \ t\in[t_{m-1},t_m),\  m\in\{1,\dots,N\}, \ \mbox{it} \ \mbox{holds}\ \mbox{that}\notag\\
	&\qquad\qquad\qquad \big\|(S(t)-S_{h,\Dt}^m)v\big\|_{H}\leq c(h^{\rho}+\Dt^{\frac{\rho}{2}})t^{-\frac{\rho-\mu}{2}}\big\|A^{\frac{\mu}{2}}v\big\|_H.\label{d-operatoresti-positive}
\end{align}
We refer to \cite[Lemma 4.1]{kruse2019randomized} for the uniform operator bound in \eqref{operatoruniformbound}, to \cite[Lemma 7.3]{thomee2007galerkin} for the  operator estimate \eqref{nond-operatoresti}, and to \cite[(4.7)]{qi2019optimal} for estimates \eqref{d-operatoresti-positive}.

It is worth emphasizing that scheme~\eqref{randomizedMilstein} is linear at each time step. Indeed, once $u_h^{n-1}$ is given, both $u_{h,\xi_n}^n$ and $u_h^n$ can be computed by applying the resolvent operators $S_{h,\xi_n\Dt}$ and $S_{h,\Dt}$, respectively. The well-posedness of scheme~\eqref{randomizedMilstein} is established in Appendix Lemma~\ref{wellposedness}.

\section{Strong convergence}\label{convergenceanalysis}
Our objective in this section is to derive the error estimate for \bex
\max\limits_{n\in\{0,\dots,N\}}\|u(t_n)-u_h^n\|_{L^2(\Omega,H)},
\eex
where $u(t_n)$ denotes the mild solution of problem \eqref{underlyingproblem} at the time point $t_n=n\Dt$, and $u_h^n$ is the numerical solution obtained from scheme \eqref{randomizedMilstein}, and the $L^2(\Omega, H)$-norm is given by 
 \bex
\|\cdot\|_{L^2(\Omega, H)}:=\big(\mathbb{E_\xi}[\|\cdot\|^2_{L^2(\Omega_{W},H)}]\big)^{\frac{1}{2}}=\big(\mathbb{E_\xi}\big(\mathbb{E}_W[\|\cdot\|_H^2]\big)\big)^{\frac{1}{2}}.
\eex
For the convenience of analysis, we introduce the notations
	\begin{align}
	B_{h,\Delta t}(u_h^{n-1},\xi_n)
	&:= S_{h,\Delta t}\Big(\Delta t F(u_{h,\xi_n}^n)
	+ \int_{t_{n-1}}^{t_n}\mathscr{G}(\tau,u_h^{n-1})\, dW(\tau)\Big),\ \ n=1,\dots,N,\label{Bhdtdiscrete}\\
		B_{h, \Delta t} ( u(t_{n-1}), \xi_{n})
	&:= S_{h, \Delta t} 
	\Big( \Delta t F( u (t_{n-1}, \xi_{n})) + \int_{t_{n-1}}^{t_{n}} \mathscr{G} (\tau, u(t_{n-1})) d W(\tau)\Big),\ \ n=1,\dots,N,\label{Bhdtcontinue}\\
	u(t_{n-1}, \xi_{n})
	& := S_{h, \xi_{n} \Delta t} 
	\Big( u(t_{n-1}) + \xi_{n} \Delta t F( u(t_{n-1})) + G( u(t_{n-1})) \Delta W_{\xi_{n}}^{n-1}\Big),\ \ n=1,\dots,N. \label{PSI}
		\end{align}
Next, we establish an a priori estimate for the difference between the terms $B_{h,\Delta t}(u_h^{j-1},\xi_j)$ and $B_{h,\Delta t}(u(t_{j-1}),\xi_j)$ for $j=1,\dots,n$, which is stated in the following lemma.
\begin{lemma}\label{lemma1}
	For $\Dt\in(0,T)$, $h\in(0,1)$ and $n\in\{1,\dots,N\}$, the following bounds hold 
	\begin{equation}\label{Nonlinearstability}
			\Big\|\sum_{j=1}^n S_{h,\Delta t}^{n-j}
			\big(B_{h,\Delta t}(u(t_{j-1}),\xi_j)
			- B_{h,\Delta t}(u_h^{j-1},\xi_j)\big)\Big\|_{L^2(\Omega,H)}^2
			\le c\Delta t \sum_{j=1}^n (t_n-t_{j-1})^{-1/2}
			\big\|u(t_{j-1})-u_h^{j-1}\big\|_{L^2(\Omega,H)}^2.
	\end{equation}
\end{lemma}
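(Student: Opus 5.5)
Write $e^{j-1}:=u(t_{j-1})-u_h^{j-1}$. Subtracting \eqref{Bhdtdiscrete} from \eqref{Bhdtcontinue} splits the sum into a drift part and a stochastic part:
\begin{align*}
I_1&:=\Dt\sum_{j=1}^n S_{h,\Dt}^{n-j+1}\big(F(u(t_{j-1},\xi_j))-F(u_{h,\xi_j}^j)\big),\\
I_2&:=\sum_{j=1}^n S_{h,\Dt}^{n-j+1}\int_{t_{j-1}}^{t_j}\big(\mathscr{G}(\tau,u(t_{j-1}))-\mathscr{G}(\tau,u_h^{j-1})\big)\,dW(\tau).
\end{align*}
I will bound $\|I_1\|^2_{L^2(\Omega,H)}$ and $\|I_2\|^2_{L^2(\Omega,H)}$ separately and then use $(a+b)^2\le 2a^2+2b^2$. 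One ingredient is used for both parts: a one-step stability estimate
\begin{equation*}
\|u(t_{j-1},\xi_j)-u_{h,\xi_j}^j\|_{L^2(\Omega,H)}\le c\|e^{j-1}\|_{L^2(\Omega,H)}.
\end{equation*}
To prove it, compare \eqref{PSI} with the first line of \eqref{randomizedMilstein}. The operator $S_{h,\xi_j\Dt}$ is a contraction on $H$ for every value of $\xi_j$, and $F$ is Lipschitz by \eqref{Fcondition}. For the noise term, condition on $\xi_j$ and use the independence of $\xi_j$ from $W$ and from $e^{j-1}$. The It\^o isometry together with \eqref{lipofG} then gives $\mathbb{E}\|(G(u(t_{j-1}))-G(u_h^{j-1}))\Delta W^{j-1}_{\xi_j}\|_H^2\le c\,\xi_j\Dt\,\mathbb{E}\|e^{j-1}\|_H^2$.

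\emph{Drift part.} Since the drift difference is not a martingale increment, I estimate $I_1$ by the triangle inequality and Cauchy--Schwarz:
\begin{equation*}
\|I_1\|^2\le \Big(\Dt\sum_j\|S^{n-j+1}_{h,\Dt}\|\,\|F(u(t_{j-1},\xi_j))-F(u_{h,\xi_j}^j)\|\Big)^2.
\end{equation*}
The operator norms are bounded by \eqref{operatoruniformbound}, and Cauchy--Schwarz gives
\begin{equation*}
\|I_1\|^2\le c\,T\,\Dt\sum_j\|e^{j-1}\|^2.
\end{equation*}
Because $(t_n-t_{j-1})^{-1/2}\ge T^{-1/2}$, this is dominated by the right-hand side of \eqref{Nonlinearstability}.

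\emph{Stochastic part.} The $j$-th summand of $I_2$ is $\mathcal{F}^{\pi_{\Dt}}_j$-measurable and has zero conditional mean given $\mathcal{F}^{\pi_{\Dt}}_{j-1}$. Note that $\mathscr{G}$ does not involve $\xi_j$, and $W$ is independent of $\xi$. Hence the cross terms vanish, and the It\^o isometry gives
\begin{equation*}
\|I_2\|^2=\sum_j\int_{t_{j-1}}^{t_j}\mathbb{E}\big\|S_{h,\Dt}^{n-j+1}\big(\mathscr{G}(\tau,u(t_{j-1}))-\mathscr{G}(\tau,u_h^{j-1})\big)\big\|^2_{\L_0^2}\,d\tau.
\end{equation*}
I split the integrand into two pieces.
\begin{itemize}
\item The $G$ piece is bounded by $c\|e^{j-1}\|^2$, using \eqref{lipofG} and \eqref{operatoruniformbound}.
\item The piece $(G'G(u(t_{j-1}))-G'G(u_h^{j-1}))(W(\tau)-W(t_{j-1}))$ is handled by \eqref{deriveg2}. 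Since $W(\tau)-W(t_{j-1})$ is independent of $\mathcal{F}_{t_{j-1}}$, its second moment in $\L_0^2$ is bounded by $c(\tau-t_{j-1})\|G'G(u(t_{j-1}))-G'G(u_h^{j-1})\|^2_{HS(Q^{1/2}(H),\L_0^2)}$. This is the standard Milstein-term computation, and the definition \eqref{anotherHilbert} is exactly what makes it work. The result is a bound of order $\Dt\,\|e^{j-1}\|^2$.
\end{itemize}
Integrating over $[t_{j-1},t_j]$ then gives $\|I_2\|^2\le c\Dt\sum_j\|e^{j-1}\|^2$.

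Both parts are therefore bounded by $c\Dt\sum_j\|e^{j-1}\|^2$, which is smaller than the weighted sum in \eqref{Nonlinearstability}; the weight $(t_n-t_{j-1})^{-1/2}$ is not really needed here. I expect the main technical obstacle to be making the conditioning on the product space rigorous. One has to check that $e^{j-1}$ is $\mathcal{F}^{\pi_{\Dt}}_{j-1}$-measurable, that $\xi_j$ is independent of $(W,e^{j-1})$, and that the It\^o isometry may be applied pathwise in $\xi$ before taking $\mathbb{E}_\xi$. The HS-norm computation for the iterated-integral term also needs care.
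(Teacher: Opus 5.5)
Your argument is correct, and it differs from the paper's in two places.

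For the drift part, the paper writes $S_{h,\Dt}^{n-j+1}=A_h^{1/2}S_{h,\Dt}^{n-j+1}A_h^{-1/2}\mathcal{P}_h$ and uses the smoothing bound \eqref{nond-operatoresti} with $\rho=\frac12$. That is where the weight $(t_n-t_{j-1})^{-1/2}$ comes from, after a round trip $\|A_h^{-1/2}\mathcal{P}_hv\|_H\le c\|v\|_{\dot{H}^{-1}}\le c\|v\|_H$ that gains nothing. You use only \eqref{operatoruniformbound} and Cauchy--Schwarz. This gives the stronger unweighted bound $c\Dt\sum_j\|e^{j-1}\|^2_{L^2(\Omega,H)}$, which is dominated by the right-hand side because $(t_n-t_{j-1})^{-1/2}\ge T^{-1/2}$.

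The more important difference is in the stochastic part. The paper applies the triangle inequality to the sum over $j$ and gets $J_2\le c\Dt^{1/2}\sum_j\|e^{j-1}\|_{L^2(\Omega,H)}$. It then claims $J_2^2\le c\Dt\sum_j\|e^{j-1}\|^2_{L^2(\Omega,H)}$ ``by Cauchy--Schwarz''. But Cauchy--Schwarz only gives $c\,n\Dt\sum_j\|e^{j-1}\|^2=c\,t_n\sum_j\|e^{j-1}\|^2$. That is one factor of $\Dt$ short of the lemma, and too weak to close the discrete Gronwall step in Theorem~\ref{needprooftheorem}. (The $\Dt$ prefactors in front of $J_2$ and $J_3$ in \eqref{condition32} are spurious, and the paper drops them in the estimates.)

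Your observation fixes this. The summands are martingale differences with respect to $\{\mathcal{F}^{\pi_{\Dt}}_j\}$. Equivalently, for fixed $\omega_\xi$ the whole sum is one It\^o integral over $[0,t_n]$ with a predictable piecewise integrand. So the cross terms vanish, and this supplies exactly the missing $\Dt$. Your route is therefore not merely an alternative: it is what makes the $G$-difference term work.

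For the $G'G$ term the triangle inequality would have been enough, since it still gives $c\,t_n\Dt\sum_j\|e^{j-1}\|^2$. Your HS-norm computation there is the same as the paper's \eqref{J5mid}.
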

\begin{proof}
By \eqref{Bhdtdiscrete}, \eqref{Bhdtcontinue}, and the expression for $\mathscr{G}(\tau,\cdot)$, we deduce the following bound 
	\begin{equation}
		\begin{aligned}\label{condition32}
			&\Big\|\sum_{j=1}^{n}S_{h,\Dt}^{n-j}\big(B_{h,\Dt}(u(t_{j-1}),\xi_j)-B_{h,\Dt}(u_h^{j-1},\xi_j)\big)\Big\|_{L^2(\Omega,H)}\\
			&\leq \Dt\Big\|\sum_{j=1}^{n}S_{h,\Dt}^{n-j+1}\big(F(u(t_{j-1},\xi_j))-F(u^{j}_{h,\xi_j})\big)\Big\|_{L^2(\Omega,H)}\\
			&\quad+\Dt\Big\|\sum_{j=1}^{n}S_{h,\Dt}^{n-j+1}\big(G(u(t_{j-1}))-G(u_h^{j-1})\big)\big(W(t_j)-W(t_{j-1})\big)\Big\|_{L^2(\Omega,H)}\\
			&\quad+\Dt\Big\|\sum_{j=1}^{n}S_{h,\Dt}^{n-j+1}\int_{t_{j-1}}^{t_j}\Big(\int_{t_{j-1}}^{\tau}
			\big(G'(u(t_{j-1}))G(u(t_{j-1}))-G'(u_h^{j-1})G(u_h^{j-1})\big)dW(s)\Big)dW(\tau)\big)\Big\|_{L^2(\Omega,H)}\\&=:J_1+J_2+J_3.
		\end{aligned}
	\end{equation}
	Next we estimate $J_1$, $J_2$, $J_3$, respectively. For $J_1$, we use \eqref{Fcondition}, \eqref{nond-operatoresti} with $\rho=\frac{1}{2}$, \eqref{PSI}, and the inequality $\|A_h^{-\frac{1}{2}}\mathcal{P}_h v\|_{H}\leq c\|v\|_{\dot{H}^{-1}}$ for $v\in\dot{H}^{-1}$ (cf. \cite[(13)]{kruse2014consistency}). Together with the Sobolev embedding $L^2(D)\hookrightarrow \dot{H}^{-1}$, this yields
	\begin{equation*}
		\begin{aligned}
			J_1&\leq \Dt\sum_{j=1}^{n}\Big\|A_h^{\frac{1}{2}}S_{h,\Dt}^{n-j+1}A_h^{-\frac{1}{2}}\mathcal{P}_h\big(F(u(t_{j-1},\xi_j))-F(u^{j}_{h,\xi_j})\big)\Big\|_{L^2(\Omega,H)}\\
			&\leq c\Dt\sum_{j=1}^{n}(t_n-t_{j-1})^{-\frac{1}{2}}\big\|F(u(t_{j-1},\xi_j))-F(u^{j}_{h,\xi_j})\big\|_{L^2(\Omega,\dot{H}^{-1})}\\
			&\leq c\Dt\sum_{j=1}^{n}(t_n-t_{j-1})^{-\frac{1}{2}}\big\|F(u(t_{j-1},\xi_j))-F(u^{j}_{h,\xi_j})\big\|_{L^2(\Omega,H)}\\
			&\leq c\Dt\sum_{j=1}^{n}(t_n-t_{j-1})^{-\frac{1}{2}}\big\|u(t_{j-1},\xi_j)-u^{j}_{h,\xi_j}\big\|_{L^2(\Omega,H)}\\
			&\leq c\Dt\sum_{j=1}^{n}(t_n-t_{j-1})^{-\frac{1}{2}}\Big(\big\|S_{h,\xi_j\Dt}(u(t_{j-1})-u_h^{j-1})\big\|_{L^2(\Omega,H)}+\big\|\xi_j\Dt S_{h,\xi_j\Dt}(F(u(t_{j-1}))-F(u_h^{j-1}))\big\|_{L^2(\Omega,H)}\\
			&\quad+\big\|S_{h,\xi_j\Dt}(G(u(t_{j-1}))-G(u_h^{j-1}))\Delta W_{\xi_j}^{j-1}\big\|_{L^2(\Omega,H)}\Big)=:J_{1,1}+J_{1,2}+J_{1,3}.
		\end{aligned}
	\end{equation*}
	We proceed to estimate $J_{1,1}$, $J_{1,2}$, and $J_{1,3}$ separately. For the first term $J_{1,1}$, employing $\|S_{h,\xi_j\Dt}\|_{\L(H)}\leq c$ to derive
	\begin{equation*}
		J_{1,1}\leq c\Dt\sum_{j=1}^{n}(t_n-t_{j-1})^{-\frac{1}{2}}\big\|u(t_{j-1})-u_h^{j-1}\big\|_{L^2(\Omega,H)}.
	\end{equation*}
	To estimate $J_{1,2}$, by virtue of $\xi_j\sim\mathcal{U}(0,1)$, $\Dt\in(0,T)$, $\|S_{h,\xi_j\Dt}\|_{\L(H)}\leq c$, and \eqref{Fcondition}, we obtain
	\begin{equation*}
		\begin{aligned}
			J_{1,2}&\leq c\Dt\sum_{j=1}^{n}(t_n-t_{j-1})^{-\frac{1}{2}}\| S_{h,\xi_j\Dt}\|_{\L(H)}\big\|F(u(t_{j-1}))-F(u_h^{j-1})\|_{L^2(\Omega,H)}\\
			&\leq c\Dt\sum_{j=1}^{n}(t_n-t_{j-1})^{-\frac{1}{2}}\big\|u(t_{j-1})-u_h^{j-1}\|_{L^2(\Omega,H)}.
		\end{aligned}
	\end{equation*}
	For $J_{1,3}$, we invoke the Burkholder-Davis-Gundy inequality, along with $\xi_j\sim\mathcal{U}(0,1)$, $\Dt\in(0,T)$, $\|S_{h,\xi_j\Dt}\|_{\L(H)}\leq c$, and \eqref{lipofG}, to deduce
	\begin{equation*}
		\begin{aligned}
			J_{1,3}&\leq c\Dt\sum_{j=1}^{n}(t_n-t_{j-1})^{-\frac{1}{2}}\Big(\mathbb{E_\xi}\Big[\mathbb{E}_W\big[\big\|(G(u(t_{j-1}))-G(u_h^{j-1}))\Delta W^{j-1}_{\xi_j}\big\|^2_H\big]\Big]\Big)^{\frac{1}{2}}\\
			&\leq cT^{\frac{1}{2}}\Dt\sum_{j=1}^{n}(t_n-t_{j-1})^{-\frac{1}{2}}\Big(\mathbb{E_\xi}\Big[\big\|G(u(t_{j-1}))-G(u_h^{j-1})\big\|^2_{L^2(\Omega_{W},\L_0^2)}\Big]\Big)^{\frac{1}{2}}\\
			&\leq c\Dt\sum_{j=1}^{n}(t_n-t_{j-1})^{-\frac{1}{2}}\big\|u(t_{j-1})-u_h^{j-1}\|_{L^2(\Omega,H)}.
		\end{aligned}
	\end{equation*}
	Combining the estimate for $J_{1,1}$, $J_{1,2}$, $J_{1,3}$ gives
	\begin{equation*}
		J_1\leq c\Dt\sum_{j=1}^{n}(t_n-t_{j-1})^{-\frac{1}{2}}\big\|u(t_{j-1})-u_h^{j-1}\big\|_{L^2(\Omega,H)}.
	\end{equation*}
	Furthermore, by Cauchy-Schwarz inequality and inequality $\Dt\sum_{j=1}^{n}(t_n-t_{j-1})^{-\frac{1}{2}}\leq 2 T^{\frac{1}{2}}$ (cf. \cite[(32)]{kruse2014consistency}), we obtain
	\begin{equation}\label{J3square}
		\begin{aligned}
			J_1^2&\leq c\Dt^2\Big(\sum_{j=1}^{n}(t_n-t_{j-1})^{-\frac{1}{2}}\Big)\Big(\sum_{j=1}^{n}(t_n-t_{j-1})^{-\frac{1}{2}}\|u(t_{j-1})-u_h^{j-1}\|^2_{L^2(\Omega,H)}\Big)\\
			&\leq c\Dt\sum_{j=1}^{n}(t_n-t_{j-1})^{-\frac{1}{2}}\big\|u(t_{j-1})-u_h^{j-1}\|^2_{L^2(\Omega,H)}.
		\end{aligned}
	\end{equation}
	For the estimation of $J_2$. Applying $\|S_{h,\xi_j\Dt}\|_{\L(H)}\leq c$, Burkholder-Davis-Gundy inequality, and \eqref{lipofG} yields
	\begin{equation*}
		\begin{aligned}
			J_2&\leq c\sum_{j=1}^{n}\Big(\mathbb{E_\xi}\Big[\mathbb{E}_W\big[\big\|\big(G(u(t_{j-1}))-G(u_h^{j-1})\big)(W(t_j)-W(t_{j-1}))\big\|_H^2\big]\Big]\Big)^{\frac{1}{2}}\\
			&\leq c\Dt^{\frac{1}{2}}\sum_{j=1}^{n}\Big(\mathbb{E_\xi}\Big[\big\|G(u(t_{j-1}))-G(u_h^{j-1})\big\|^2_{L^2(\Omega_{W},\L_0^2)}\Big]\Big)^{\frac{1}{2}}\leq c\Dt^{\frac{1}{2}}\sum_{j=1}^{n}\big\|G(u(t_{j-1}))-G(u_h^{j-1})\big\|_{L^2(\Omega,\L_0^2)}\\&
			\leq c\sum_{j=1}^{n}\|u(t_{j-1})-u_h^{j-1}\|_{L^2(\Omega,H)}\Dt^{\frac{1}{2}}.
		\end{aligned}
	\end{equation*}
	Further using Cauchy-Schwarz inequality and inequality $\Dt\sum_{j=1}^{n}(t_n-t_{j-1})^{-\frac{1}{2}}\leq 2 T^{\frac{1}{2}}$ allows to deduce
	\begin{equation}\label{J4square}
		\begin{aligned}
			J_2^2&\leq c\Dt\sum_{j=1}^{n}\|u(t_{j-1})-u_h^{j-1}\|^2_{L^2(\Omega,H)}\leq cT^\frac{1}{2}\Dt\sum_{j=1}^{n}(t_n-t_{j-1})^{-\frac{1}{2}}\big\|u(t_{j-1})-u_h^{j-1}\|^2_{L^2(\Omega,H)}.
		\end{aligned}
	\end{equation}
	For the term $J_3$, we first use  $\|S_{h,\Dt}^{n-j+1}\|_{\L(H)}\leq c$ and Burkholder-Davis-Gundy inequality to deduce
	\begin{equation}\label{J5ini}
		\begin{aligned}
			J_3&\leq c\sum_{j=1}^{n}\Big(\mathbb{E_\xi}\Big[\mathbb{E}_W\Big[\Big\|\int_{t_{j-1}}^{t_j}\Big(\int_{t_{j-1}}^{\tau}\big(G'(u(t_{j-1}))G(u(t_{j-1}))-G'(u_h^{j-1})G(u_h^{j-1})\big)dW(s)\Big)dW(\tau)\Big\|^2_H\Big]\Big]\Big)^{\frac{1}{2}}\\
			&\leq c\sum_{j=1}^{n}\Big(\mathbb{E_\xi}\Big[\int_{t_{j-1}}^{t_j}\Big\|\int_{t_{j-1}}^{\tau}\big(G'(u(t_{j-1}))G(u(t_{j-1}))-G'(u_h^{j-1})G(u_h^{j-1})\big)dW(s)\Big\|^2_{L^2(\Omega_{W},\L_0^2)}d\tau\Big]\Big)^{\frac{1}{2}}.
		\end{aligned}
	\end{equation}
	Now we are guided to estimate the term  $\big\|\int_{t_{j-1}}^{\tau}\big(G'(u(t_{j-1}))G(u(t_{j-1}))-G'(u_h^{j-1})G(u_h^{j-1})\big)dW(s)\big\|^2_{L^2(\Omega_{W},\L_0^2)}$. Applying Karhunen-Lo\`eve expansion \eqref{K-Lexpansion} to $W(s)$, and using the norm definition \eqref{HilbertHimitnorm}, the assumption \eqref{deriveg2} and It{\^o} isometry, we obtain for $\tau\in[t_{j-1},t_j]$,
	\begin{equation}\label{J5mid}
		\begin{aligned}
			\Big\|&\int_{t_{j-1}}^{\tau}\big(G'(u(t_{j-1}))G(u(t_{j-1}))-G'(u_h^{j-1})G(u_h^{j-1})\big)dW(s)\Big\|^2_{L^2(\Omega_{W},\L_0^2)}\\&=\big\|\big(G'(u(t_{j-1}))G(u(t_{j-1}))-G'(u_h^{j-1})G(u_h^{j-1})\big)(W(\tau)-W(t_{j-1}))\big\|^2_{{L^2(\Omega_{W},\L_0^2)}}\\&
			=\mathbb{E}_W\Big[\sum_{i=1}^{\infty}\big\|(G'(u(t_{j-1}))G(u(t_{j-1}))-G'(u_h^{j-1})G(u_h^{j-1}))q_i^{\frac{1}{2}}\Big(\sum_{k=1}^{\infty}q_k^{\frac{1}{2}}\phi_k\big(\beta_k(\tau)-\beta_k(t_{j-1})\Big)\phi_i\big\|^2_{H}\Big]
			\\
			&
			=(\tau-t_{j-1})\mathbb{E}_W\Big[\sum_{i=1}^{\infty}q_i\Big\|\big(G'(u(t_{j-1}))G(u(t_{j-1}))-G'(u_h^{j-1})G(u_h^{j-1})\big)\Big(\sum_{k=1}^{\infty}q_k^{\frac{1}{2}}\phi_k\Big)\phi_i\Big\|^2_{H}\Big]\\
			&\leq(\tau-t_{j-1})\mathbb{E}_W\Big[\sum_{i=1}^{\infty}\sum_{k=1}^{\infty}q_iq_k\Big\|\big(G'(u(t_{j-1}))G(u(t_{j-1}))-G'(u_h^{j-1})G(u_h^{j-1})\big)\phi_i\phi_k\Big\|^2_{H}\Big]\\
			&\leq(\tau-t_{j-1})\mathbb{E}_{W}\Big[\Big\|G'(u(t_{j-1}))G(u(t_{j-1}))-G'(u_h^{j-1})G(u_h^{j-1})\Big\|^2_{HS(Q^{\frac{1}{2}}(H),\L_0^2)}\Big]\leq c\Dt\big\|u(t_{j-1})-u_h^{j-1}\big\|^2_{{L^2(\Omega_W,H)}}.
		\end{aligned}
	\end{equation}
	Substituting \eqref{J5mid} into \eqref{J5ini} yields
	\begin{equation}\label{J5square}
		J_3^2\leq c\Dt^2\sum_{j=1}^{n}\big\|u(t_{j-1})-u_h^{j-1}\big\|^2_{L^2(\Omega,H)}\leq cT^{\frac{3}{2}}\Dt\sum_{j=1}^{n}(t_n-t_{j-1})^{-\frac{1}{2}}\|u(t_{j-1})-u_h^{j-1}\|^2_{L^2(\Omega,H)}.
	\end{equation}
	Finally, by combining the bounds derived in \eqref{condition32}, \eqref{J3square}, \eqref{J4square}, and \eqref{J5square}, we obtain the desired estimate in \eqref{Nonlinearstability}. This concludes the proof.
\end{proof}

Before deriving the strong convergence rate of the proposed scheme, we also need to establish an a priori bound for the difference between $\mathscr{G}(\cdot)$ and $G(\cdot)$. This estimate, stated in Lemmas~\ref{priorestimate} and~\ref{lemmalast}, plays a crucial role in the subsequent error analysis.
\begin{lemma}\label{priorestimate}
	Let $u_0\in L^p(\Omega_{W},\dot{H}^{2})$, $p\ge2$, be an $\mathcal{F}^W_0$-measurable random variable. Then there exists a constant $c>0$, independent of $h$ and $\Dt$, and an arbitrarily small number $\varepsilon_0>0$, such that
	\be\label{priorinequality}
	\|G(u(\tau))-\mathscr{G}(\tau, u(t_{j-1}))\|_{{L^2(\Omega,\L_0^2)}}\leq c(\tau-t_{j-1})^{1-\varepsilon_0},\ \ \forall \  t_{j-1}\leq \tau\leq t_j.
	\ee
\end{lemma}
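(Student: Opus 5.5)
Since $\mathscr{G}(\tau,u(t_{j-1}))=G(u(t_{j-1}))+G'(u(t_{j-1}))G(u(t_{j-1}))(W(\tau)-W(t_{j-1}))$, the natural route is a second-order Taylor expansion of $G$ around $u(t_{j-1})$, followed by a replacement of the increment $u(\tau)-u(t_{j-1})$ by its leading stochastic part. Write $\delta:=\tau-t_{j-1}\le\Dt$. By Taylor's formula with integral remainder,
\begin{equation*}
G(u(\tau))-G(u(t_{j-1}))=G'(u(t_{j-1}))\big(u(\tau)-u(t_{j-1})\big)+R_j(\tau),
\end{equation*}
where by \eqref{deriveg1} we have $\|R_j(\tau)\|_{\L_0^2}\le c\|u(\tau)-u(t_{j-1})\|_H^2$. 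Then \eqref{holderregularity} with $p=4$ and $\gamma=0$ gives $\|R_j(\tau)\|_{L^2(\Omega,\L_0^2)}\le c\|u(\tau)-u(t_{j-1})\|^2_{L^4(\Omega_W,H)}\le c\delta$. It remains to show
\begin{equation*}
\big\|G'(u(t_{j-1}))\big(u(\tau)-u(t_{j-1})\big)-G'(u(t_{j-1}))G(u(t_{j-1}))(W(\tau)-W(t_{j-1}))\big\|_{L^2(\Omega,\L_0^2)}\le c\delta^{1-\varepsilon_0}.
\end{equation*}
The deterministic expectation $\mathbb{E}_\xi$ is trivial here, so everything is in $L^2(\Omega_W)$.

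Use the mild formulation from $t_{j-1}$:
\begin{equation*}
u(\tau)-u(t_{j-1})=(S(\delta)-I)u(t_{j-1})+\int_{t_{j-1}}^\tau S(\tau-s)F(u(s))ds+\int_{t_{j-1}}^\tau S(\tau-s)G(u(s))dW(s).
\end{equation*}
Since $G'$ is uniformly bounded in $\L(H,\L_0^2)$, the $H$-norms suffice for the following terms:
\begin{itemize}
\item[(a)] $\|(S(\delta)-I)A^{-\frac{1+\gamma}{2}}\|_{\L(H)}\le c\delta^{\frac{1+\gamma}{2}}$ together with \eqref{uregularity} gives $c\delta^{\frac{1+\gamma}{2}}$, and we take $\gamma=1-2\varepsilon_0$ so that this is $c\delta^{1-\varepsilon_0}$. (Since $u_0\in\dot H^2$, one might even use $\gamma$ close to $1$ directly; the $\varepsilon_0$ loss comes from $\gamma<1$ in \eqref{uregularity}.)
\item[(b)] The drift integral is bounded by $c\delta$ by the linear growth in \eqref{Fcondition}.
\item[(c)] Split the stochastic integral as
\begin{equation*}
\int(S(\tau-s)-I)G(u(s))dW+\int\big(G(u(s))-G(u(t_{j-1}))\big)dW+G(u(t_{j-1}))(W(\tau)-W(t_{j-1})).
\end{equation*}
The last piece cancels exactly the Milstein term.
\end{itemize}

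The main obstacle is bounding the first two pieces of (c) at order $\delta^{1-\varepsilon_0}$, given the $\L(H,\L_0^2)$ structure of $G'$. The difficulty is that $G'(u(t_{j-1}))$ acts on an $H$-valued stochastic integral while the result must be measured in $\L_0^2$. Since $G'(u(t_{j-1}))$ is $\mathcal{F}_{t_{j-1}}$-measurable and bounded, it suffices to bound each $H$-valued integral in $L^2(\Omega_W,H)$.
\begin{itemize}
\item For the semigroup piece, use the It\^o isometry, $\|(S(r)-I)A^{-s}\|\le cr^{s}$ with $s=\frac12-\varepsilon_0$, and \eqref{linearofG} with $s$ near $\frac12$ combined with \eqref{uregularity}. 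This gives $\big(\int_{t_{j-1}}^\tau(\tau-s)^{1-2\varepsilon_0}ds\big)^{1/2}\le c\delta^{1-\varepsilon_0}$.
\item For the Lipschitz piece, the It\^o isometry, \eqref{lipofG} and \eqref{holderregularity} yield $\big(\int_{t_{j-1}}^\tau (s-t_{j-1})ds\big)^{1/2}\le c\delta$.
\end{itemize}
Collecting (a), (b), (c) and the Taylor remainder, all contributions are $O(\delta^{1-\varepsilon_0})$, which proves \eqref{priorinequality}.
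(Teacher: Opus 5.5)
Your proposal is correct and follows the paper's proof essentially step by step. Both use the same second-order Taylor expansion, with the remainder bounded through the $L^4$ Hölder estimate \eqref{holderregularity}. Both then split $u(\tau)-u(t_{j-1})-G(u(t_{j-1}))(W(\tau)-W(t_{j-1}))$ into $u(\tau)-u(t_{j-1})-\int_{t_{j-1}}^{\tau}G(u(r))\,dW(r)$ plus the Lipschitz stochastic integral $\int_{t_{j-1}}^{\tau}\big(G(u(r))-G(u(t_{j-1}))\big)\,dW(r)$. The only difference is that you derive the $O(\delta^{1-\varepsilon_0})$ bound on the first of these pieces yourself from the mild formulation, whereas the paper cites \cite[Lemma 10.27]{LORD2014} for it.
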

\begin{proof}
	We begin by applying a Taylor expansion to the nonlinear diffusion term $G(\cdot)$, yielding
	\be\label{Taylor1}
	G(u(\tau))=G(u(t_{j-1}))+G'(u(t_{j-1}))(u(\tau)-u(t_{j-1}))+\theta_1(t_{j-1},\tau),
	\ee
	where the remainder term $\theta_1(t_{j-1},\tau)$ is given by
	\bex
	\theta_1(t_{j-1},\tau):=\int_{0}^{1}(1-r)G''\big(u(t_{j-1})+r(u(\tau)-u(t_{j-1}))\big)(u(\tau)-u(t_{j-1}))^2dr.
	\eex
	Define the auxiliary term
	\bex
	\theta_2(t_{j-1},\tau):=u(\tau)-u(t_{j-1})-G(u(t_{j-1}))(W(\tau)-W(t_{j-1})).
	\eex
	Substituting the identity \bex
	u(\tau)-u(t_{j-1})=\theta_2(t_{j-1},\tau)+G(u(t_{j-1}))(W(\tau)-W(t_{j-1}))
	\eex
	into \eqref{Taylor1}, and recalling that \bex
	\mathscr{G}(\tau, u(t_{j-1}))=G(u(t_{j-1}))+G'(u(t_{j-1}))G(u(t_{j-1}))(W(\tau)-W(t_{j-1})),
	\eex
	we arrive at
	\begin{equation*}
		\begin{aligned}
			G(u(\tau))&=G(u(t_{j-1}))+G'(u(t_{j-1}))G(u(t_{j-1}))(W(\tau)-W(t_{j-1}))+G'(u(t_{j-1}))\theta_2(t_{j-1},\tau)+\theta_1(t_{j-1},\tau)\\
			&=\mathscr{G}(\tau,u(t_{j-1}))+G'(u(t_{j-1}))\theta_2(t_{j-1},\tau)+\theta_1(t_{j-1},\tau).
		\end{aligned}
	\end{equation*}
	Note that $G'(u(t_{j-1}))$ is independent of $\theta_2(t_{j-1},\tau)$. By applying the bound \eqref{deriveg1}, we obtain
	\begin{equation}\label{thetaestimate:G}
		\begin{aligned}
			&\big\|G(u(\tau))-\mathscr{G}(\tau;u(t_{j-1}))\big\|_{{L^2(\Omega,\L_0^2)}}\leq c\big(\big\|G'(u(t_{j-1}))\theta_2(t_{j-1},\tau)\big\|_{{L^2(\Omega,\L_0^2)}}+\big\|\theta_1(t_{j-1},\tau)\big\|_{L^2(\Omega,\L_0^2)}\big)\\
			&\leq c\Big(\big(\mathbb{E}_\xi\big[\mathbb{E}_W[\big\|G'(u(t_{j-1}))\big\|^2_{{\L(H,\L_0^2)}}\big\|\theta_2(t_{j-1},\tau)\big\|^2_{{H}}\big]\big]\big)^{\frac{1}{2}}+\big\|\theta_1(t_{j-1},\tau)\big\|_{L^2(\Omega,\L_0^2)}\Big)\\
			&\leq c\Big(\big(\mathbb{E_\xi}\big[\big\|\theta_2(t_{j-1},\tau)\big\|^2_{{L^2(\Omega_W,H)}}\big]\big)^{\frac{1}{2}}+\big(\mathbb{E_\xi}\big[\big\|\theta_1(t_{j-1},\tau)\big\|^2_{L^2(\Omega_W,\L_0^2)}\big]\big)^{\frac{1}{2}}\Big)
		\end{aligned}
	\end{equation}
	We first estimate $\big\|\theta_2(t_{j-1},\tau)\big\|_{{L^2(\Omega_W,H)}}$. Define
	\bex
	\theta_3(t_{j-1},\tau):=u(\tau)-u(t_{j-1})-\int_{t_{j-1}}^{\tau}G(u(r))dW(r).
	\eex 
	According to \cite[Lemma 10.27, (10.39)]{LORD2014}, there exists an infinitesimal number $\varepsilon_0>0$ such that
	\begin{equation}\label{needresult}
		\|\theta_3(t_{j-1},\tau)\|_{L^2(\Omega_{W},H)}=\Big\| u(\tau)-u(t_{j-1})-\int_{t_{j-1}}^{\tau}G(u(\tau))dW(\tau)\Big\|_{L^{2}(\Omega_W,H)}\leq c(\tau-t_{j-1})^{1-\varepsilon_0}.
	\end{equation}
	Observe that $\theta_3(t_{j-1},\tau)$ can be rewritten as
	\begin{equation*}
		\begin{aligned}
			\theta_3(t_{j-1},\tau) =& u(\tau)-u(t_{j-1})-G(u(t_{j-1}))(W(\tau)-W(t_{j-1}))
			- \int_{t_{j-1}}^{\tau}\big(G(u(r))-G(u(t_{j-1}))\big)dW(r)\\
			=& \theta_2(t_{j-1},\tau)- \int_{t_{j-1}}^{\tau}\big(G(u(r))-G(u(t_{j-1}))\big)dW(r).
		\end{aligned}
	\end{equation*}
	A use of the  triangle inequality gives
	\bex
	\big\|\theta_2(t_{j-1},\tau)\big\|^2_{{H}} 
	\le c\Big(
	\big\|\theta_3(t_{j-1},\tau)\big\|^2_{{H}} 
	+
	\Big\|\int_{t_{j-1}}^{\tau}\big(G(u(r))-G(u(t_{j-1}))\big)dW(r)\Big\|^2_{{H}}\Big).
	\eex
	Taking expectations $\mathbb{E}_W[\cdot]$ on both sides and applying the It\^o isometry, along with \eqref{needresult}, \eqref{lipofG}, and the temporal regularity estimate \eqref{holderregularity}, yields
	\begin{equation}
		\begin{aligned}\label{theta2est}
			\big\|\theta_2(t_{j-1},&\tau)\big\|^2_{{L^2(\Omega_W,H)}}\leq c\Big( \big\|\theta_3(t_{j-1},\tau)\big\|^2_{{L^2(\Omega_W,H)}}+\mathbb{E}_W\Big[\Big\|\int_{t_{j-1}}^{\tau}\Big(G(u(r))-G(u(t_{j-1}))\big)dW(r)\Big\|^2_{{H}}\Big]\Big)\\&
			\leq  c\big\|\theta_3(t_{j-1},\tau)\big\|^2_{{L^2(\Omega_W,H)}}+c\int_{t_{j-1}}^{\tau}\mathbb{E}_W\big[\big\|G(u(r))-G(u(t_{j-1}))\big\|^2_{{\L_0^2}}\big]dr\\
			&\leq c(\tau-t_{j-1})^{2(1-\varepsilon_0)}+c(\tau-t_{j-1})\max_{t_{j-1}\leq r\leq\tau}\big\|u(r)-u(t_{j-1})\big\|^2_{{L^2(\Omega_W,H)}}\\&
			\leq c(\tau-t_{j-1})^{2(1-\varepsilon_0)}+c(\tau-t_{j-1})\max_{t_{j-1}\leq r\leq\tau}\big\|u(r)-u(t_{j-1})\big\|^2_{{L^2(\Omega_W,\dot{H}^1)}}\leq c (\tau-t_{j-1})^{2(1-\varepsilon_0)}.
		\end{aligned}
	\end{equation}
	It remains to estimate $\big\|\theta_1(t_{j-1},\tau)\big\|_{{L^2(\Omega_W,H)}}$.
	By the $\L^{\otimes2}(H,\L_0^2)$-norm definition and invoking \eqref{deriveg1}, we have
	\begin{equation*}
		\begin{aligned}
			\|\theta_1(t_{j-1},\tau)\|_{{\L_0^2}}&\leq \int_{0}^{1}(1-r)\big\|G''\big(u(t_{j-1})+r(u(\tau)-u(t_{j-1}))\big)(u(\tau)-u(t_{j-1}))^2\big\|_{{\L_0^2}}dr\\
			&\leq \int_{0}^{1}(1-r)\big\|G''\big(u(t_{j-1})+r(u(\tau)-u(t_{j-1}))\big)\big\|_{{\L^{\otimes2}(H,\L_0^2)}}\big\|u(\tau)-u(t_{j-1})\big\|^2_{{H}}dr\\
			&\leq c\int_{0}^{1}(1-r)\big\|u(\tau)-u(t_{j-1})\big\|^2_{{H}}dr\leq c\big\|u(\tau)-u(t_{j-1})\big\|^2_{{H}}.
		\end{aligned}
	\end{equation*}
	Applying the temporal regularity estimate \eqref{holderregularity} yields
	\be\label{theta3est}
	\big\|\theta_1(t_{j-1},\tau)\big\|_{{L^2(\Omega_W,\L_0^2)}}\leq c\Big(\mathbb{E}_W\big[\big\|u(\tau)-u(t_{j-1})\big\|^4_{H}\big]\Big)^{\frac{1}{2}}\leq c(\tau-t_{j-1}).
	\ee
	Substituting the bounds \eqref{theta2est} and \eqref{theta3est} into \eqref{thetaestimate:G} completes the proof.
\end{proof}
\begin{lemma}\label{lemmalast}
	Let $u_0\in L^p(\Omega_{W},\dot{H}^{2})$, $p\ge2$, be an $\mathcal{F}^W_0$-measurable random variable. Then there exists a constant $c>0$, independent of $h$ and $\Dt$, and an arbitrarily small number $\varepsilon_0>0$ such that
	\begin{equation*}
		\max_{n\in\{1,\dots,N\}}\Big\|\sum_{j=1}^{n}S_{h,\Dt}^{n-j+1}\int_{t_{j-1}}^{t_j}\big(G(u(\tau))-\mathscr{G}(\tau,u(t_{j-1}))\big)dW(\tau)\Big\|_{L^2(\Omega,H)}\leq c\Dt^{1-\varepsilon_0}.
	\end{equation*}
\end{lemma}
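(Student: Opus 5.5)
The plan is to treat the whole sum as a single stochastic integral over $[0,t_n]$ and then use the Itô isometry. This lets the squared increments add up over $j$, instead of summing norms; summing norms would lose a factor $\Dt^{-1/2}$.

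Fix $n$ and define the piecewise integrand
\bex
\Phi_n(\tau):=S_{h,\Dt}^{n-j+1}\big(G(u(\tau))-\mathscr{G}(\tau,u(t_{j-1}))\big),\qquad \tau\in[t_{j-1},t_j),\ j=1,\dots,n.
\eex
For every fixed realization of $\xi$, this process is $\{\mathcal{F}^W_\tau\}$-predictable. Indeed, $u(\tau)$ is adapted, and $\mathscr{G}(\tau,u(t_{j-1}))$ depends only on $u(t_{j-1})$ and $W(\tau)-W(t_{j-1})$. The operator $S_{h,\Dt}^{n-j+1}$ is deterministic; it does not depend on $\xi$, since only $S_{h,\xi_j\Dt}$ is random. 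The sum in the statement therefore equals $\int_0^{t_n}\Phi_n(\tau)\,dW(\tau)$.

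Applying the Itô isometry under $\mathbb{E}_W$ for each fixed $\xi$, and then taking $\mathbb{E}_\xi$, gives
\bex
\Big\|\int_0^{t_n}\Phi_n\,dW\Big\|^2_{L^2(\Omega,H)}=\sum_{j=1}^n\int_{t_{j-1}}^{t_j}\mathbb{E}_\xi\mathbb{E}_W\big[\|\Phi_n(\tau)\|^2_{\L_0^2}\big]\,d\tau .
\eex
Next we use $\|S_{h,\Dt}^{n-j+1}L\|_{\L_0^2}\le\|S_{h,\Dt}^{n-j+1}\|_{\L(H)}\|L\|_{\L_0^2}\le\|L\|_{\L_0^2}$, which follows from \eqref{operatoruniformbound}. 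Combined with Lemma~\ref{priorestimate}, this bounds each integrand by $c(\tau-t_{j-1})^{2(1-\varepsilon_0)}\le c\Dt^{2-2\varepsilon_0}$. Summing the $n\le N$ subintervals of length $\Dt$ gives
\bex
\Big\|\int_0^{t_n}\Phi_n\,dW\Big\|^2_{L^2(\Omega,H)}\le c\,t_n\,\Dt^{2-2\varepsilon_0}\le cT\Dt^{2-2\varepsilon_0}.
\eex
Taking square roots yields $c\Dt^{1-\varepsilon_0}$ uniformly in $n$, and hence the maximum over $n$ obeys the same bound.

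The only delicate point is justifying that the sum is a genuine single stochastic integral with an adapted integrand on the product space, so that the cross terms between different $j$ vanish. For $i<j$, the $i$-th summand is $\mathcal{F}^W_{t_{j-1}}$-measurable, and the $j$-th summand has zero conditional mean given $\mathcal{F}^W_{t_{j-1}}$. This holds for each fixed $\xi$, and then $\mathbb{E}_\xi$ is applied afterwards; Fubini is justified because the integrand is square integrable, by Lemma~\ref{priorestimate} together with \eqref{linearofG} and \eqref{uregularity}. Everything else is a direct consequence of Lemma~\ref{priorestimate}.
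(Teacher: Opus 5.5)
Your proposal is correct and follows essentially the same route as the paper. The paper likewise treats the sum as a single stochastic integral and applies the It\^o isometry (which it calls Burkholder--Davis--Gundy), so that the squared contributions add over $j$. It then uses $\|S_{h,\Dt}^{n-j+1}\|_{\L(H)}\le 1$ from \eqref{operatoruniformbound} and inserts Lemma~\ref{priorestimate} to obtain $c\sum_{j=1}^{n}\int_{t_{j-1}}^{t_j}(\tau-t_{j-1})^{2(1-\varepsilon_0)}\,d\tau\le c\Dt^{2(1-\varepsilon_0)}$; your justification of predictability and of the vanishing cross terms fills in details the paper leaves implicit.
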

\begin{proof}
	Making use of Burkholder-Davis-Gundy inequality, together with the operator bound \eqref{operatoruniformbound} and the prior estimate \eqref{priorinequality}, we obtain
	\begin{equation*}
		\begin{aligned}
			&\Big\|\sum_{j=1}^{n}S_{h,\Dt}^{n-j+1}\int_{t_{j-1}}^{t_j}\big(G(u(\tau))-\mathscr{G}(\tau,u(t_{j-1}))\big)dW(\tau)\Big\|^2_{L^2(\Omega,H)}\\&
			\leq c\sum_{j=1}^{n}\Big(\mathbb{E_\xi}\Big[\int_{t_{j-1}}^{t_j}\big\|G(u(\tau))-\mathscr{G}(\tau,u(t_{j-1}))\big\|^2_{L^2(\Omega_{W},\L_0^2)}d\tau\Big]\Big)\\
			&\leq c\sum_{j=1}^{n}\int_{t_{j-1}}^{t_j}(\tau-t_{j-1})^{2(1-\varepsilon_0)}d\tau\leq c\Dt^{2(1-\varepsilon_0)}, \ \ n\in\{1,\dots,N\},
		\end{aligned}
	\end{equation*}
	where $\varepsilon_0>0$ is an arbitrarily small number. This completes the proof.
\end{proof}
Now we are in a position to derive the fully discrete error bound, as stated in the following theorem.
\begin{theorem}\label{needprooftheorem}
	Let $u(t)$ and $u_h^n$ be the mild solution and numerical solution given respectively by \eqref{for17} and \eqref{randomizedMilstein}. Suppose that $u_0\in L^p(\Omega_{W},\dot{H}^{2})$, $p\ge2$, is an $\mathcal{F}^W_0$-measurable random variable. Then there exists a constant $c>0$, independent of $\Dt\in(0,T)$ and $h\in(0,1)$, such that the following strong error estimate holds:
	\begin{equation*}
		\max_{n\in\{0,\dots,N\}}\big\| u(t_n)-u^n_{h}\big\|_{{L^2(\Omega,H)}}\leq c\big(\Delta t^{1-\varepsilon_0}+h^{2-\varepsilon_0}\big),
	\end{equation*}
	where $\varepsilon_0$ is an arbitrarily small constant.
\end{theorem}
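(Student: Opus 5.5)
The plan is to write the error $e^n:=u(t_n)-u_h^n$ through a discrete variation-of-constants formula and close with a discrete Gronwall inequality that has a weakly singular kernel. Unrolling \eqref{randomizedMilstein} gives
\begin{equation*}
u_h^n=S_{h,\Dt}^n\mathcal{P}_hu_0+\sum_{j=1}^n S_{h,\Dt}^{n-j}B_{h,\Dt}(u_h^{j-1},\xi_j).
\end{equation*}
The mild solution \eqref{for17} at $t_n$ can be written as $S(t_n)u_0+\sum_{j}\int_{t_{j-1}}^{t_j}S(t_n-\tau)\big(F(u(\tau))d\tau+G(u(\tau))dW(\tau)\big)$. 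I then add and subtract $\sum_j S_{h,\Dt}^{n-j}B_{h,\Dt}(u(t_{j-1}),\xi_j)$ and split $e^n$ into the following pieces.
\begin{itemize}
\item[(i)] The initial error $(S(t_n)-S_{h,\Dt}^n)u_0$. By \eqref{d-operatoresti-positive} with $\mu=\rho=2$ and $u_0\in\dot H^2$, this is $O(h^2+\Dt)$.
\item[(ii)] The stability term of Lemma~\ref{lemma1}.
\item[(iii)] The drift consistency error $\sum_j\big(\int_{t_{j-1}}^{t_j}S(t_n-\tau)F(u(\tau))d\tau-\Dt S_{h,\Dt}^{n-j+1}F(u(t_{j-1},\xi_j))\big)$.
\item[(iv)] The diffusion consistency error $\sum_j\int_{t_{j-1}}^{t_j}\big(S(t_n-\tau)G(u(\tau))-S_{h,\Dt}^{n-j+1}\mathscr{G}(\tau,u(t_{j-1}))\big)dW(\tau)$.
\end{itemize}

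For (iv), I split further into $\int (S(t_n-\tau)-S_{h,\Dt}^{n-j+1})G(u(\tau))dW$ and $S_{h,\Dt}^{n-j+1}\int(G(u(\tau))-\mathscr{G})dW$. The second part is exactly Lemma~\ref{lemmalast}, giving $\Dt^{1-\varepsilon_0}$. For the first part I use the It\^o isometry, \eqref{d-operatoresti-positive} with $\rho=2$ and $\mu=1-\varepsilon_0$, and \eqref{linearofG} with $s=\frac{1-\varepsilon_0}{2}$ together with \eqref{uregularity}. This gives $\big(\sum_j\int (h^{2}+\Dt)^2 (t_n-\tau)^{-(1+\varepsilon_0)}d\tau\big)^{1/2}$. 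That integral diverges, so I instead choose $\rho=2-\varepsilon_0$, giving the integrable kernel $(t_n-\tau)^{-1+\ldots}$ and hence the rate $h^{2-\varepsilon_0}+\Dt^{1-\varepsilon_0/2}$. A small mismatch between $t_n-\tau$ and $t_{n-j+1}$ is absorbed in the standard way, as in \cite{kruse2014consistency}.

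For (iii), I write the drift term as three pieces.
\begin{itemize}
\item[(a)] $\int (S(t_n-\tau)-S_{h,\Dt}^{n-j+1})F(u(\tau))d\tau$. This is handled by \eqref{d-operatoresti-positive} with $\mu=0$, $\rho=2-\varepsilon_0$ and the linear growth \eqref{Fcondition}; the singularity $(t_n-\tau)^{-1+\varepsilon_0/2}$ is integrable.
\item[(b)] $S_{h,\Dt}^{n-j+1}\big(\int_{t_{j-1}}^{t_j}F(u(\tau))d\tau-\Dt F(u(t_{j-1}+\xi_j\Dt))\big)$. This is the key randomization term. Conditionally on $W$, the summands are $\mathbb{E}_\xi$-mean zero and form a discrete martingale with respect to $\mathcal{F}^{\pi_{\Dt}}_j$. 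The deterministic operators $S_{h,\Dt}^{n-j+1}$ do not spoil orthogonality, because $u$ does not depend on $\xi$. Hence the squared $L^2$ norm is bounded by $\sum_j\Dt^2\,\mathbb{E}\|F(u(\tau_j))-\tfrac1\Dt\int F(u)\|^2$. Using Lipschitz continuity and \eqref{holderregularity}, this is at most $c\sum_j\Dt^2\cdot\Dt\le c\Dt^2$, i.e.\ $O(\Dt)$, following \cite[Lemma 5.7]{kruse2019randomized}.
\item[(c)] $\Dt S_{h,\Dt}^{n-j+1}\big(F(u(t_{j-1}+\xi_j\Dt))-F(u(t_{j-1},\xi_j))\big)$. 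Here I need $\|u(t_{j-1}+\xi_j\Dt)-u(t_{j-1},\xi_j)\|_{L^2(\Omega,H)}$ to be $O(\Dt^{1/2})$ in a form that sums to order $\Dt$. A crude bound of $\Dt^{1/2}$ per step loses half an order. The remedy is to split the difference into its mean-zero stochastic part and the rest. The stochastic part $\int_{t_{j-1}}^{t_{j-1}+\xi_j\Dt}(\cdot)dW$ gives errors that are not martingale increments after applying $F$. So I instead use $\rho=\frac12$ smoothing as in the $J_1$ estimate. The one-step error of the intermediate stage is $\|u(t_{j-1}+\xi\Dt)-u(t_{j-1},\xi_j)\|\le c(\Dt^{\frac{1+\gamma}{2}}+h^{1+\gamma})$, obtained by comparing the mild formula on $[t_{j-1},t_{j-1}+\xi_j\Dt]$ with the one-step Euler stage via \eqref{d-operatoresti-positive} with $\mu$ near $1$ and $\rho$ near $2$. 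Since $\Dt\sum_j$ of this is $O(\Dt^{1-\varepsilon_0}+h^{2-\varepsilon_0})$ once $\gamma\to1$, this is enough. The $G(u)\Delta W$ part of the stage difference is handled via the Lipschitz bound on $G$ and the Hölder estimate, giving $\Dt^{1/2}\cdot\Dt^{1/2}$.
\end{itemize}

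I expect term (c) to be the main obstacle: obtaining a full order $\Dt^{1-\varepsilon_0}$, rather than $\frac12$, for the internal stage error without differentiating $F$. Collecting all pieces gives
\begin{equation*}
\|e^n\|^2_{L^2(\Omega,H)}\le c(\Dt^{1-\varepsilon_0}+h^{2-\varepsilon_0})^2+c\Dt\sum_{j=1}^n(t_n-t_{j-1})^{-1/2}\|e^{j-1}\|^2_{L^2(\Omega,H)}.
\end{equation*}
A discrete Gronwall lemma with weakly singular kernel, as in \cite{kruse2014consistency}, then finishes the proof. Well-posedness and finite moments of $u_h^n$ from Lemma~\ref{wellposedness} ensure all quantities are finite.
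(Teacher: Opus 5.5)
Your proposal is correct and follows essentially the same route as the paper. It uses the same error recursion, closed by Lemma~\ref{lemma1} and a weakly singular discrete Gronwall inequality, and splits the consistency error the same way: the semigroup/resolvent approximation terms (the paper's $I_2$--$I_4$), the randomized-quadrature martingale term (Lemma~\ref{lastlemma1}), the internal-stage term and the Milstein remainder (Lemma~\ref{lemmalast}).

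For the internal-stage term, the stage error is already $O(\Dt^{1-\varepsilon_0}+h^{2-\varepsilon_0})$ per step, so the crude triangle inequality suffices, as in Lemma~\ref{lastlemma2}. Your detour through martingale splitting and $\rho=\frac12$ smoothing is unnecessary.

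There is one bookkeeping slip in (iv). Choosing $s=\frac{1-\varepsilon_0}{2}$ (i.e.\ $\mu=1-\varepsilon_0$) together with $\rho=2-\varepsilon_0$ still gives the non-integrable squared kernel $(t_n-\tau)^{-1}$. Take $s=\frac12$, $\mu=1$ instead, which \eqref{linearofG} and \eqref{uregularity} allow.
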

\begin{proof}
	From scheme \eqref{randomizedMilstein}, with the initial condition $u_h^0=P_hu_0$, we obtain
	\begin{equation*}
		u_h^n
		= S_{h,\Delta t}\Big(u_h^{n-1}
		+ \Delta t F(u_{h,\xi_n}^n)
		+ \int_{t_{n-1}}^{t_n} \mathscr{G}(\tau,u_h^{n-1})\, dW(\tau)\Big),
	\end{equation*}
	where the intermediate value $u_{h,\xi_n}^n$ is given by
	\begin{equation*}
		u_{h,\xi_n}^n
		= S_{h,\xi_n\Delta t}\Big(u_h^{n-1}
		+ \xi_n\Delta t F(u_h^{n-1})
		+ G(u_h^{n-1})\Delta W^{n-1}_{\xi_{n}}\Big).
	\end{equation*}
According to the \eqref{Bhdtdiscrete}, the numerical scheme can be written compactly as
	\begin{equation*}
		u_h^n = S_{h,\Delta t}u_h^{n-1}
		+ B_{h,\Delta t}(u_h^{n-1},\xi_n).
	\end{equation*}
	We define the error at time level $n$ by
	\begin{equation*}
		e_h^n := u(t_n)-u_h^n,\ \ n=1,\dots,N.
	\end{equation*}
	Subtracting the numerical solution $u_h^n$ from the exact solution $u(t_n)$ yields the error recursion
	\begin{equation}\label{fullerror}
		e_h^n
		= S_{h,\Delta t} e_h^{n-1}
		+ \Big(B_{h,\Delta t}(u(t_{n-1}),\xi_n)
		- B_{h,\Delta t}(u_h^{n-1},\xi_n)\Big)
		+ R_{h,\Delta t}(u(t_n)),
	\end{equation}
	where $B_{h,\Delta t}(u(t_{n-1}),\xi_n)$, $B_{h,\Delta t}(u_h^{n-1},\xi_n)$, and $u(t_{n-1},\xi_n)$ are defined in \eqref{Bhdtcontinue}, \eqref{Bhdtdiscrete}, and \eqref{PSI}, respectively, and the remainder term $R_{h,\Delta t}(u(t_n))$ is given by
	\begin{equation}\label{reminder}
		R_{h, \Delta t} (u(t_{n}))
		 := u (t_{n}) - S_{h, \Delta t} u (t_{n-1}) - B_{h, \Delta t} ( u(t_{n-1}), \xi_{n}), 
	\end{equation}
	Recurring the equation \eqref{fullerror} gives
	\begin{equation}\label{errorrec}
		e_h^n = S_{h,\Delta t}^n e_h^0
		+ \sum_{j=1}^n S_{h,\Delta t}^{n-j}
		\Big(B_{h,\Delta t}(u(t_{j-1}),\xi_j)
		- B_{h,\Delta t}(u_h^{j-1},\xi_j)\Big)  + \sum_{j=1}^n S_{h,\Delta t}^{n-j}
		R_{h,\Delta t}(u(t_j)), \ \ n=1,\dots,N,
	\end{equation}
	where $e_h^0=u_0-\mathcal{P}_h u_0$. Taking the $L^2(\Omega,H)$-norm on both sides of the equation \eqref{errorrec} and using the inequality
	$(a+b+c)^2\le c(a^2+b^2+c^2)$, we arrive at
	\begin{align*}
		\big\|e_h^n\big\|_{L^2(\Omega,H)}^2
		&\le c\big\|S_{h,\Delta t}^n e_h^0\big\|_{L^2(\Omega,H)}^2  + c\Big\|\sum_{j=1}^n S_{h,\Delta t}^{n-j}
		\big(B_{h,\Delta t}(u(t_{j-1}),\xi_j)
		- B_{h,\Delta t}(u_h^{j-1},\xi_j)\big)\Big\|_{L^2(\Omega,H)}^2 \\
		&\quad + c\Big\|\sum_{j=1}^n S_{h,\Delta t}^{n-j}
		R_{h,\Delta t}(u(t_j))\Big\|_{L^2(\Omega,H)}^2.
	\end{align*}
	From the estimate \eqref{Nonlinearstability}, we have
	\begin{equation*}
		\Big\|\sum_{j=1}^n S_{h,\Delta t}^{n-j}
		\big(B_{h,\Delta t}(u(t_{j-1}),\xi_j)
		- B_{h,\Delta t}(u_h^{j-1},\xi_j)\big)\Big\|_{L^2(\Omega,H)}^2
		\le c\Delta t \sum_{j=1}^n (t_n-t_{j-1})^{-1/2}
		\big\|e_h^{j-1}\big\|_{L^2(\Omega,H)}^2.
	\end{equation*}
	Consequently,
	\begin{align*}
		\big\|e_h^n\big\|_{L^2(\Omega,H)}^2
		&\le c\big\|S_{h,\Delta t}^n e_h^0\big\|_{L^2(\Omega,H)}^2
		+ c 
		\Big\|\sum_{j=1}^n S_{h,\Delta t}^{n-j}
		R_{h,\Delta t}(u(t_j))\Big\|_{L^2(\Omega,H)}^2 + c\Delta t \sum_{j=1}^n
		(t_n-t_{j-1})^{-1/2}\big\|e_h^{j-1}\big\|_{L^2(\Omega,H)}^2.
	\end{align*}
	An application of Gronwall's inequality \cite[Lemma 3.9]{kruse2014consistency} yields
	\begin{equation*}
		\big\|e_h^n\big\|_{L^2(\Omega,H)}^2
		\le c\big\|S_{h,\Delta t}^n e_h^0\big\|_{L^2(\Omega,H)}^2
		+ c 
		\Big\|\sum_{j=1}^n S_{h,\Delta t}^{n-j}
		R_{h,\Delta t}(u(t_j))\Big\|_{L^2(\Omega,H)}^2.
	\end{equation*}
	Denote $I_1:=\big\|S_{h,\Delta t}^n e_h^0\big\|_{L^2(\Omega,H)}$. By the mild formulation (see \eqref{for17}) of the exact solution $u(t_n)$, we obtain
	\begin{align*}
		& \Big\|\sum_{j=1}^n S_{h,\Delta t}^{n-j} R_{h,\Delta t}(u(t_j))\Big\|_{L^2(\Omega,H)} \leq \big\|(S(t_n)-S_{h,\Delta t}^n)u_0\big\|_{L^2(\Omega,H)} \\
		&\quad+ \Big\|\sum_{j=1}^n \int_{t_{j-1}}^{t_j}
		(S(t_n-\tau)-S_{h,\Delta t}^{n-j+1})F(u(\tau))\,d\tau\Big\|_{L^2(\Omega,H)} \\
		&\quad + \Big\|\sum_{j=1}^n \int_{t_{j-1}}^{t_j}
		(S(t_n-\tau)-S_{h,\Delta t}^{n-j+1})G(u(\tau))\,dW(\tau)\Big\|_{L^2(\Omega,H)} \\
		&\quad + \Big\|\sum_{j=1}^n S_{h,\Delta t}^{n-j}
		\Big(\int_{t_{j-1}}^{t_j} S_{h,\Delta t}F(u(\tau))\,d\tau
		+ \int_{t_{j-1}}^{t_j} S_{h,\Delta t}G(u(\tau))\,dW(\tau)
		- B_{h,\Delta t}^j(u(t_{j-1}),\xi_j)\Big)\Big\|_{L^2(\Omega,H)}\\&
		\quad=:I_2+I_3+I_4+I_5.
	\end{align*}
	Therefore
	\begin{equation}\label{fullerrordecomp}
		\big\|e_h^n\big\|_{L^2(\Omega,H)}
		\le I_1 + I_2 + I_3 + I_4 + I_5.
	\end{equation}
By \eqref{operatoruniformbound} and \eqref{bound1} (see Lemma \ref{lemma1result} in Appendix), we deduce 
\begin{equation}\label{errorI1}
I_1=\big\|S_{h,\Dt}^n(u_0-\mathcal{P}_hu_0)\big\|_{L^2(\Omega,H)}\leq c\|S_{h,\Dt}^n\|_{\L(H)}\|u_0-\mathcal{P}_h u_0\|_{L^2(\Omega_{W},H)}\leq c h^{2}\|u_0\|_{L^2(\Omega_{W},\dot{H}^{2})}.
\end{equation}
From \eqref{bound2}--\eqref{bound4} (see Lemma \ref{lemma1result} in Appendix), we have
\begin{equation}\label{errorI2-I4}
		I_2+I_3+I_4
		\le c(h^{2-\varepsilon_0}+\Delta t^{1-\varepsilon_0}),
\end{equation}
where $\varepsilon_0>0$ denotes an arbitrarily small number. It remains to estimate $I_5$. According to \eqref{Bhdtcontinue} and \eqref{PSI}, $I_5$ can be bounded by
	\begin{equation*}
		I_5 \le I_{5,1}+I_{5,2}+I_{5,3},
	\end{equation*}
	where 
		\begin{align*}
			 I_{5,1}:=&\Big\|\sum_{j=1}^{n}S_{h,\Dt}^{n-j+1}\int_{t_{j-1}}^{t_j}\big(F(u(\tau))-F(u(t_{j-1}+\xi_j\Dt))\big)d\tau\Big\|_{L^2(\Omega,H)},\\
			I_{5,2}:=&\Big\|\Dt\sum_{j=1}^{n}S_{h,\Dt}^{n-j+1}\big(F(u(t_{j-1}+\xi_j\Dt))-F(u(t_{j-1},\xi_j))\big)\Big\|_{L^2(\Omega,H)},\\
			I_{5,3}:=&\Big\|\sum_{j=1}^{n}S_{h,\Dt}^{n-j+1}\int_{t_{j-1}}^{t_j}\big(G(u(\tau))-\mathscr{G}(\tau,u(t_{j-1}))\big)dW(\tau)\Big\|_{L^2(\Omega,H)}.
		\end{align*}
Using \eqref{martigalederivative} (see Lemma \ref{lastlemma1} in Appendix) yields
	\begin{equation*}
		I_{5,1} \le c\Delta t.
	\end{equation*}
By Lemma \ref{lastlemma2}, there exists an infinitesimal positive number $\varepsilon_0>0$ such that
	\begin{equation*}
		I_{5,2} \le c(h^{2-\varepsilon_0}+\Delta t^{1-\varepsilon_0}).
	\end{equation*}
From Lemma \ref{lemmalast}, there exists an arbitrarily small number $\varepsilon_0>0$ such that
	\begin{equation*}
		I_{5,3} \le c\Delta t^{1-\varepsilon_0}.
	\end{equation*}
Therefore, combining all above estimates of $I_{5,1}-I_{5,3}$, we arrive at 
\be\label{I5estimate}
I_5\leq c (h^{2-\varepsilon_0}+\Delta t^{1-\varepsilon_0}).
\ee
By \eqref{fullerrordecomp}, \eqref{errorI1}, \eqref{errorI2-I4}, and \eqref{I5estimate}, we arrive at
	\begin{equation*}
	\|e_h^n\|_{L^2(\Omega,H)}
		\le C(\Delta t^{1-\varepsilon_0}+h^{2-\varepsilon_0}),\ n=1,\dots,N.
	\end{equation*}
	This completes the proof.
\end{proof}
\begin{remark}
	In \cite[(4)]{kruse2014consistency}, Kruse introduced a \emph{semi-implicit Milstein-Galerkin finite element} scheme for the full discretization of semilinear SPDEs, and established strong convergence rates of order  $\mathcal{O}(h^{1+r}+\Dt^{\frac{1+r}{2}})$ with $r\in[0,1)$ (see \cite[Theorem 1.1]{kruse2014consistency}). In the present work, Theorem~\ref{needprooftheorem} shows that the proposed numerical scheme \eqref{randomizedMilstein} achieves strong convergence of order  (1-$\varepsilon_0$) in time and order (2-$\varepsilon_0$) in space, which correspond to the rates obtained in \cite[Theorem 1.1]{kruse2014consistency} when
	 $r=1-\varepsilon_0$. 
	 
	 A key distinction between the two approaches lies in the analytical assumptions employed. The convergence analysis in \cite{kruse2014consistency} fundamentally relies on the Fr{\'e}chet differentiability of the nonlinear drift term $F$ (see  \cite[Assumption 2.3]{kruse2014consistency}). In contrast, the analysis of scheme \eqref{randomizedMilstein} developed in this work avoids such differentiability requirements by leveraging a martingale-type inequality (see Lemma~\ref{lastlemma1}). 
\end{remark}

\section{Numerical experiments}\label{numericalresult}
Since the main contribution of this work is the proof of spatio-temporal strong convergence rates for the numerical scheme \eqref{randomizedMilstein}, this section presents numerical experiments to corroborate the theoretical convergence orders. 

We first consider a one-dimensional problem on $D=(0,1)$:
		\begin{equation}
			\begin{aligned}\label{demension1}
				\mathrm{d}u(t)&=\Big(-Au+\frac{1}{1+|u|}\Big)\mathrm{d}t+\delta u\mathrm{d}W(t),\    t\in(0,T],\\
				u(0)&=u_0,
			\end{aligned}
		\end{equation}
where $Au:=-u_{xx}$ is equipped with homogeneous Dirichlet boundary conditions, and $\delta>0$ denotes the noise intensity. The nonlinear drift is $F(u)=\frac{1}{1+|u|}$ and the diffusion coefficient is $G(u)=u$. It is straightforward to verify that $F$ and $G$ satisfy \eqref{Fcondition} and Assumption \ref{Gassumption}. The driving $Q$-Wiener process $W(t)$ is defined by
		\bex
		W(t):=\sum\limits_{j=1}^\infty \sqrt{q_j}\phi_j \beta_j(t)=\sum\limits_{j=1}^\infty \sqrt{2q_j} \sin(j\pi x)\beta_j(t), \ x\in\bar{D},
		\eex
		where $q_j:=\mathcal{O}(j^{-2})$ and $\{\beta_j(t)\}_{j\ge 1}$ are independent standard Brownian motions. The initial datum is chosen as $u_0(x)=\sin(2\pi x)$, $x\in \bar{D}$. In all experiments we set $T=0.1$ and $\delta=0.5$.
		
		We investigate the strong (mean-square) error at the final time $T$. Since an analytic solution of problem \eqref{demension1} is unavailable, we employ a reference solution computed on a sufficiently fine space-time grid. For the temporal-convergence test, the reference solution is computed using $h=1/128$ and $\Delta t=10^{-6}$; for the spatial convergence test, we use $h=1/512$ and $\Dt=10^{-5}$. The mean-square error,  $\|u(T)-u^N_h\|_{L^2(\Omega,H)}$, is approximated by the empirical average over 500 independent samples:
		\bex
		\big\|u(T)-u^N_h\big\|_{L^2(\Omega,H)}\approx\Big(\frac{1}{500}\sum_{j=1}^{500}\big\|u^{\rm{ref}}_j(T)-u^N_{j,h}(T)\big\|_{H}^2\Big)^\frac{1}{2}=:u_{\rm{error}},
		\eex
		where $u^{\rm{ref}}_j(T)$ and $u_{j,h}^N(T)$ denote the $j$-th realizations of the reference and fully discrete numerical solutions, respectively. 
		
		Since $G(u)=u$ satisfies the standard commutativity-type condition (see, e.g., \cite[Assumption 3, Remark 1]{jentzen2015milstein}), the L{\'e}vy area terms that typically arise in Milstein-type discretizations can be avoided. Moreover, to approximate the It{\^o} stochastic integrals involving the term $\mathscr{G}(\cdot)$, we follow the standard discretization procedure described in \cite[pp.~464]{LORD2014}. We compute \eqref{demension1} using the proposed scheme \eqref{randomizedMilstein} and, for comparison, the semi-implicit Milstein-Galerkin finite element scheme in \cite[(4)]{kruse2014consistency}. The numerical results are reported in Table \ref{1comparisionoftsc}. The experimental orders of convergence (EOC) are computed by
		\bex
		\mbox{EOC}_{\rm{time}}=\log_2\Big(\frac{ u_{\rm{error}}(\Dt_i)}{u_{\rm{error}}(\Dt_{i+1})}\Big), \ \  \mbox{EOC}_{\rm{space}}=\log_2\Big(\frac{u_{\rm{error}}(h_i)}{u_{\rm{error}}(h_{i+1})}\Big), \ \ i=1,2,3,4,
		\eex
		where $u_{\rm{error}}(\Dt_i)$ and $u_{\rm{error}}(h_i)$ denote the numerical errors corresponding to the time step $\Dt_i$ and spatial mesh size $h_i$, respectively. From Table \ref{1comparisionoftsc}, we observe that the proposed scheme \eqref{randomizedMilstein} exhibits temporal and spatial convergence orders approaching one and two, respectively. These rates are consistent with those of the semi-implicit Milstein--Galerkin finite element scheme and are in good agreement with the theoretical analysis.

Furthermore, we consider a two-dimensional numerical example. We revisit problem \eqref{demension1} on the spatial domain $D=(0,1)^2$. In this setting, we take  $A=-(\frac{\partial^2}{\partial x_1^2}+\frac{\partial^2}{\partial x_2^2}),\ x_1,x_2\in(0,1)$, and let $W(t)$ be $Q$-Wiener process of the form
\bex
W(t):=\sum_{j_1,j_2=1}^{\infty}\sqrt{q_{j_1,j_2}}\sin(j_1\pi x_1)\sin(j_2\pi x_2)\beta_{j_1,j_2}(t),
\eex
with $q_{j_1,j_2}=\exp(-\frac{j_1^2+j_2^2}{200})$ \footnote[2]{This exponentially decaying choice of $q_{j_1,j_2}$ is commonly adopted to model smooth trace-class noise in two spatial dimensions; see, for example, \cite[Example 10.12]{LORD2014}.}, and $\beta_{j_1,j_2}(t)$ denoting  independent standard Brownian motions. 

In the two dimensional case, the numerical solution computed on a sufficiently refined space-time grid is taken as the reference solution. Specifically, for the temporal convergence test, we use $\Delta t = 10^{-6}$ together with a uniform $64\times64$ spatial mesh. For the spatial convergence test, we take $\Delta t = 10^{-4}$ and a uniform $256\times256$ mesh. Similar to the one-dimensional case, we compute the numerical error $u_{\mathrm{error}}$ and the corresponding EOC. The results are reported in Table~\ref{2dimension}. From Table~\ref{2dimension}, we observe that the proposed scheme achieves temporal and spatial convergence rates approaching first order and second order, respectively. These numerical observations are in full agreement with the theoretical error estimates established in Theorem~\ref{needprooftheorem}.

\begin{table*}[htbp]
	\centering
	\caption{Time (upper table) and space (lower table) convergence rates in one-dimensional case.}
	\label{1comparisionoftsc}
	\begin{tabular}{ccccc}
		\toprule
		\multirow{2}{*}{Time step $\Dt$} & \multicolumn{2}{c}{drift-randomized scheme \eqref{randomizedMilstein}} & \multicolumn{2}{c}{semi-implicit scheme \cite[(4)]{kruse2014consistency}}\\
		\cmidrule(r){2-3} \cmidrule(r){4-5} 
		&  $u_{\rm{error}}$ &   $\mbox{EOC}_{\rm{time}}$
		&  $u_{\rm{error}}$ &   $\mbox{EOC}_{\rm{time}}$\\
		\midrule
		$\Dt_1$=	1.00E-2	&6.4816E-4  &  {\bf --} &  6.5529E-4 &  {\bf --}  \\
		$\Dt_2$=	5.00E-3	&3.2754E-4  &  0.9847  &  3.2682E-4  & 1.0036 \\
		$\Dt_3$=	2.50E-3	&1.5517E-4 & 1.0779  & 1.6278E-4  & 1.0056 \\
		$\Dt_4$=	1.25E-3	&8.6343E-5 & 0.8457  &  8.4236E-5 & 0.9504 \\
		$\Dt_5$=	6.25E-4	&4.5713E-5 & 0.9175  &  4.3098E-5  &  0.9668\\
		\bottomrule		
	\end{tabular}\quad\quad
	\begin{tabular}{ccccc}
		\toprule
		\multirow{2}{*}{Mesh size $h$} & \multicolumn{2}{c}{drift-randomized scheme \eqref{randomizedMilstein}} 
		& \multicolumn{2}{c}{semi-implicit scheme \cite[(4)]{kruse2014consistency}}\\
		\cmidrule(r){2-3} \cmidrule(r){4-5} 
		&  $u_{\rm{error}}$ &   $\mbox{EOC}_{\rm{space}}$ 
		&  $u_{\rm{error}}$ &   $\mbox{EOC}_{\rm{space}}$\\
		\midrule
		$h_1\ $=1/16	&3.4857E-3  &  {\bf --}  &  3.7389E-3 &  {\bf --}  \\
		$h_2\ $=1/32	&8.7693E-4  & 1.9909  &  9.3813E-4  & 1.9948 \\
		$h_3\ $=1/64	&2.1875E-4 & 2.0032  & 2.3346E-4  & 2.0066 \\
		$h_4$=1/128	&5.5412E-5 & 1.9810  &  5.6735E-5 & 2.0409 \\
		$h_5$=1/256	&1.3926E-5 & 1.9924  & 1.3789E-5  & 2.0407 \\
		\bottomrule		
	\end{tabular}
\end{table*}
\begin{table*}[htbp]
	\centering
	\caption{Time (left table) and space (right table) convergence rates in two-dimensional case.}
	\label{2dimension}
	\begin{tabular}{ccc}
		\toprule
		\multirow{2}{*}{Time step $\Dt$} & \multicolumn{2}{c}{drift-randomized scheme \eqref{randomizedMilstein}}\\
		\cmidrule(r){2-3} 
		&  $u_{\rm{error}}$ &   $\mbox{EOC}_{\rm{time}}$\\
		\midrule
		$\Dt_1$=	2.5000E-4	&1.2844E-2  &  {\bf --}\\
		$\Dt_2$=	1.2500E-4	&6.6730E-3  &  0.9447\\
		$\Dt_3$= 6.2500E-5	&3.4879E-3 & 0.9360\\
		$\Dt_4$= 3.1250E-5	&1.8867E-3 & 0.8865\\
		$\Dt_5$= 1.5625E-5	&9.8509E-4 & 0.9375\\
		\bottomrule		
	\end{tabular}\quad\quad
	\begin{tabular}{ccc}
		\toprule
		\multirow{2}{*}{Mesh size $h$} & \multicolumn{2}{c}{drift-randomized scheme \eqref{randomizedMilstein}}\\ 
		\cmidrule(r){2-3} 
		&  $u_{\rm{error}}$ &   $\mbox{EOC}_{\rm{space}}$\\ 
		\midrule
		$h_1$ =1/8	&1.0621E-2  &  {\bf --}\\
		$h_2$ =1/16	&2.7014E-3  &  1.9751\\ 
		$h_3$ =1/32	&7.0233E-4 &  1.9435\\
		$h_4$ =1/64	&1.8165E-4 & 1.9510\\
		$h_5$ =1/128	&4.7122E-5 & 1.9467\\
		\bottomrule		
	\end{tabular}
\end{table*}

\section{Conclusion}\label{sec13}
In this paper, we carried out a rigorous strong convergence analysis for the \emph{drift-randomized Milstein--Galerkin finite element} fully discrete scheme. Without imposing any differentiability assumptions on the nonlinear drift term, we established strong spatio-temporal convergence rates for the proposed method that are comparable to those of the classical semi-implicit Milstein--Galerkin finite element scheme. Numerical experiments were conducted to corroborate the theoretical results.

\section{Appendix}\label{appendix}
In appendix, we present some crucial lemmas that is used throughout the paper.
\begin{lemma}[Well-posedness of the scheme \eqref{randomizedMilstein}]\label{wellposedness}
	\label{lem:wellposedness}
	Assume that the drift term $F$ satisfies the global Lipschitz and linear growth conditions \eqref{Fcondition}, and that the diffusion coefficient $G$ satisfies Assumption \ref{Gassumption}. Let $u_0 \in L^p(\Omega_W,H)$, $p\ge2$, and set $u_h^0 := P_h u_0 \in L^p(\Omega,V_h)$.  Then, for each $n=1,\dots,N$, the recursion \eqref{randomizedMilstein} admits a unique solution
	\bex
	u_{h,\xi_n}^n\in L^p(\Omega,V_h)\ \ \mbox{and} \ \,u_h^n \in L^p(\Omega,V_h).
	\eex
	Moreover, the numerical solution $u_h^n$ is $\mathcal F^{\pi_{\Delta t}}_n$-measurable.
\end{lemma}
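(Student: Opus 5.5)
The plan is a forward induction on $n$. At each step we show that $u_{h,\xi_n}^n$ and $u_h^n$ are explicitly defined, lie in $L^p(\Omega,V_h)$, and are $\mathcal{F}^{\pi_{\Dt}}_n$-measurable. Both stages of \eqref{randomizedMilstein} are explicit once $u_h^{n-1}$ is known. The only linear solves are with $(I+\xi_n\Dt A_h)$ and $(I+\Dt A_h)$ on the finite-dimensional space $V_h$. These are invertible for every realization of $\xi_n\in[0,1]$, since $A_h$ is symmetric positive definite, so $I+s A_h$ has all eigenvalues at least $1$ for $s\ge0$. Hence existence and uniqueness are pathwise trivial. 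Moreover, the map $s\mapsto (I+sA_h)^{-1}\mathcal{P}_h$ is continuous in $s$ and satisfies $\|(I+sA_h)^{-1}\mathcal{P}_h\|_{\L(H)}\le 1$, exactly as in \eqref{operatoruniformbound}. The base case is $u_h^0=\mathcal{P}_hu_0$. It is $\mathcal{F}^W_0\times\mathcal{F}^\xi_0$-measurable, and $\|\mathcal{P}_h\|\le1$ gives $u_h^0\in L^p$.

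For the inductive step, assume $u_h^{n-1}\in L^p(\Omega,V_h)$ is $\mathcal{F}^{\pi_{\Dt}}_{n-1}$-measurable. We start with the intermediate stage. The map $(\omega,\xi)\mapsto S_{h,\xi_n\Dt}\big(u_h^{n-1}+\xi_n\Dt F(u_h^{n-1})+G(u_h^{n-1})\Delta W^{n-1}_{\xi_n}\big)$ is a composition of measurable maps. $F$ and $G$ are Lipschitz, hence continuous. The process $W$ has continuous paths, so $(\omega,\xi)\mapsto W(t_{n-1}+\xi_n\Dt)$ is jointly measurable with respect to $\mathcal{F}^W_{t_n}\times\sigma(\xi_n)$. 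The resolvent depends continuously on $\xi_n$. This gives $\mathcal{F}^{\pi_{\Dt}}_n$-measurability.

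For the $L^p$ bound we use the linear growth conditions \eqref{Fcondition} and \eqref{linearofG} with $s=0$. The main point is the stochastic term. Conditioning on $\xi_n$ and using that $u_h^{n-1}$ is $\mathcal{F}^W_{t_{n-1}}$-measurable, hence independent of the increment, the BDG inequality gives $\mathbb{E}_W\|G(u_h^{n-1})\Delta W^{n-1}_{\xi_n}\|_H^p\le c(\xi_n\Dt)^{p/2}\mathbb{E}_W\|G(u_h^{n-1})\|_{\L_0^2}^p$. Integrating in $\xi_n$ then yields $\|u^n_{h,\xi_n}\|_{L^p}\le c(1+\|u_h^{n-1}\|_{L^p})$.

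For the second stage, $F(u_{h,\xi_n}^n)\in L^p$ by linear growth. The stochastic integral splits into two parts. The first is $G(u_h^{n-1})(W(t_n)-W(t_{n-1}))$, handled as above. The second is the iterated integral $\int_{t_{n-1}}^{t_n}G'(u_h^{n-1})G(u_h^{n-1})(W(\tau)-W(t_{n-1}))\,dW(\tau)$. Its integrand is adapted on $[t_{n-1},t_n]$ because $u_h^{n-1}$ is $\mathcal{F}^W_{t_{n-1}}$-measurable. By BDG, the It\^o isometry computation of \eqref{J5mid}, and the boundedness implied by \eqref{deriveg2} with $v_2=0$ (so $\|G'(v)G(v)\|_{HS(Q^{1/2}(H),\L_0^2)}\le c(1+\|v\|_H)$), its $L^p$ norm is bounded by $c\Dt(1+\|u_h^{n-1}\|_{L^p})$.

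The main obstacle is a technical one: making sure these stochastic integrals are well defined on the product space. The integrand of the iterated integral involves $u_h^{n-1}$, which depends on $\xi_1,\dots,\xi_{n-1}$. I would handle this by fixing $\omega_\xi$, defining the integral on $\Omega_W$ with respect to $\{\mathcal{F}^W_t\}$, and then invoking a Fubini-type measurability argument to obtain joint measurability. Applying $S_{h,\Dt}$, which is bounded, preserves all these properties, so $u_h^n\in L^p(\Omega,V_h)$ and $u_h^n$ is $\mathcal{F}^{\pi_{\Dt}}_n$-measurable. This closes the induction.
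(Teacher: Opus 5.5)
Your proposal is correct and follows the same route as the paper's proof: induction on $n$, pathwise invertibility of $I+\alpha\Delta t A_h$ on $V_h$, measurability of each stage, and $L^p$ bounds from linear growth, the uniform resolvent bound and Burkholder--Davis--Gundy. You fill in details the paper only asserts, such as the explicit moment bounds for both stages, the Hilbert--Schmidt bound on $G'(v)G(v)$ from \eqref{deriveg2} with $v_2=0$, and joint measurability on the product space; note that your base case relies on $u_0$ being $\mathcal{F}^W_0$-measurable, which the lemma statement omits but the paper tacitly assumes elsewhere.
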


\begin{proof}
	We proceed by induction on $n$.
	
	\medskip
	\noindent
	\emph{Step 1: existence and uniqueness at a single time step.}
	Let $u_h^{n-1} \in L^p(\Omega,V_h)$ be given. Since the discrete operator $A_h:V_h\to V_h$ is symmetric and positive definite, the operator
	\[
	I+\alpha \Delta t A_h : V_h \to V_h
	\]
	is invertible for any $\alpha\in[0,1]$. Consequently, the resolvent operators
	\[
	S_{h,\alpha\Delta t} := (I+\alpha \Delta t A_h)^{-1}P_h
	\]
	are well-defined bounded linear mappings on $H$.
	
	The intermediate value $u_{h,\xi_n}^n$ is given explicitly by
	\[
	u_{h,\xi_n}^n
	=
	S_{h,\xi_n\Delta t}
	\Big(
	u_h^{n-1}
	+
	\xi_n\Delta t\,F(u_h^{n-1})
	+
	G(u_h^{n-1})\,\Delta W^{n-1}_{\xi_n}
	\Big),
	\]
	and hence is uniquely defined.  
	Similarly, $u_h^n$ is uniquely determined by
	\[
	u_h^n
	=
	S_{h,\Delta t}
	\Big(
	u_h^{n-1}
	+
	\Delta t\,F(u_{h,\xi_n}^n)
	+
	\int_{t_{n-1}}^{t_n} \mathscr{G}(\tau,u_h^{n-1})\,dW(\tau)
	\Big).
	\]
	Thus, the scheme \eqref{randomizedMilstein} is well-defined at each time step.
	
	\medskip
	\noindent
	\emph{Step 2: measurability and adaptedness.}
	Assume that $u_h^{n-1}$ is $\mathcal F^{\pi_{\Delta t}}_{n-1}$-measurable. Since
	\[
	\Delta W^{n-1}_{\xi_n}
	=
	W(t_{n-1}+\xi_n\Delta t)-W(t_{n-1})
	\]
	is measurable with respect to $\mathcal F^W_{t_{n-1}+\xi_n\Delta t}\vee\sigma(\xi_n)$ and $u_h^{n-1}$ is independent of $\xi_n$, it follows that $u_{h,\xi_n}^n$ is measurable with respect to
	$\mathcal F^W_{t_{n-1}+\xi_n\Delta t}\vee\sigma(\xi_n)$.
	
	Moreover, the stochastic integral
	\[
	\int_{t_{n-1}}^{t_n} \mathscr{G}(\tau,u_h^{n-1})\,dW(\tau)
	\]
	is well-defined since $\mathscr{G}(\cdot,u_h^{n-1})$ is predictable and square-integrable by Assumption \ref{Gassumption}. Hence this term is $\mathcal F^W_{t_n}$-measurable.
	
	Since $S_{h,\Delta t}$ is deterministic and linear, we conclude that $u_h^n$ is measurable with respect to
	\[
	\mathcal F^{\pi_{\Delta t}}_n
	=
	\mathcal F^W_{t_n}\vee \sigma(\xi_1,\dots,\xi_n).
	\]
	
	\medskip
	\noindent
	\emph{Step 3: $L^p$-integrability.}
	The $L^p(\Omega,V_h)$-integrability of $u_{h,\xi_n}^n$ and $u_h^n$ follows from the linear growth of $F$ and $G$, the boundedness of the resolvent operators \eqref{operatoruniformbound}, and standard Burkholder--Davis--Gundy inequalities.
	
	This completes the proof.
\end{proof}

\begin{lemma}\label{lemma1result}
	Let $u_0\in L^p(\Omega_{W},\dot{H}^{2})$, $p\ge2$, be an $\mathcal{F}^W_0$-measurable random variable. Then there exists a constant $c>0$, independent of $h$ and $\Dt$, and an arbitrarily small parameter $\varepsilon_0>0$ such that
	\begin{align}
		&\|u_0-\mathcal{P}_h u_0\|_{L^2(\Omega_{W},H)}\leq c h^{2}\|u_0\|_{L^2(\Omega_{W},\dot{H}^{2})}, \label{bound1}\\
		&\max_{n\in\{1,\dots,N\}}\big\|\big(S(t_n)-S_{h,\Dt}^n\big)u_0\big\|_{L^2(\Omega_{W},H)}\leq c(h^{2}+\Dt)\|u_0\|_{L^2(\Omega_{W},\dot{H}^2)},\label{bound2}\\
		&\max_{n\in\{1,\dots,N\}}\Big\|\sum_{j=1}^{n}\int_{t_{j-1}}^{t_j}\big(S(t_n-\tau)-S_{h,\Dt}^{n-j+1}\big)F(u(\tau))d\tau\Big\|_{L^2(\Omega_{W},H)}\leq c(h^{2-\varepsilon_0}+\Dt^{1-\varepsilon_0}),\label{bound3}\\
		&\max_{n\in\{1,\dots,N\}}\Big\|\sum_{j=1}^{n}\int_{t_{j-1}}^{t_j}\big(S(t_n-\tau)-S_{h,\Dt}^{n-j+1}\big)G(u(\tau))dW(\tau)\Big\|_{L^2(\Omega_{W},H)}\leq c(h^{2-\varepsilon_0}+\Dt^{1-\varepsilon_0}).\label{bound4}
	\end{align}
\end{lemma}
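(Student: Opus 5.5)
The four bounds can be handled one at a time with standard finite element and semigroup tools; only \eqref{bound3} and \eqref{bound4} need the regularity of the mild solution.

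For \eqref{bound1}, use the best-approximation property of the $L^2$-projection, $\|(I-\mathcal P_h)v\|_H\le \|v-I_hv\|_H$, where $I_h$ is the nodal interpolant. With the standard interpolation estimate $\|v-I_hv\|_H\le ch^2\|v\|_{H^2}$ and the norm equivalence $\dot H^2\simeq H^2\cap H^1_0$ on convex polygonal domains, this gives the bound pathwise. Then take $L^2(\Omega_W)$ norms.

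For \eqref{bound2}, apply \eqref{d-operatoresti-positive} with $\rho=\mu=2$ and $t=t_n$. This gives $\|(S(t_n)-S^n_{h,\Dt})u_0\|_H\le c(h^2+\Dt)\|A u_0\|_H$ pathwise, and squaring and taking expectations finishes it. If $t=t_n$ sits at the boundary of the half-open interval in \eqref{d-operatoresti-positive}, I would instead use the standard nonsmooth/smooth data estimate for the fully discrete backward Euler operator at grid points, e.g. from \cite{thomee2007galerkin}.

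For \eqref{bound3}:
\begin{itemize}
\item Split $(S(t_n-\tau)-S^{n-j+1}_{h,\Dt})$ as $(S(t_n-\tau)-S(t_n-t_{j-1}))+(S(t_n-t_{j-1})-S^{n-j+1}_{h,\Dt})$.
\item For the second piece, use \eqref{d-operatoresti-positive} with $\mu=0$ and $\rho=2-\varepsilon_0$. This gives the integrable kernel $(t_n-t_{j-1})^{-1+\varepsilon_0/2}$ times $\|F(u(\tau))\|_{L^2(\Omega_W,H)}$, and the latter is bounded by linear growth and \eqref{uregularity}. Summing, $\Dt\sum_j(t_n-t_{j-1})^{-1+\varepsilon_0/2}\le c/\varepsilon_0$, which yields $h^{2-\varepsilon_0}+\Dt^{1-\varepsilon_0/2}$.
\item For the first piece, use $\|A^{1-\varepsilon_0/2}S(t_n-\tau')\|\le c(t_n-\tau')^{-1+\varepsilon_0/2}$ and $\|A^{-1+\varepsilon_0/2}(I-S(\tau-t_{j-1}))\|\le c\Dt^{1-\varepsilon_0/2}$. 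Applying these directly to $F(u(\tau))\in H$ gives $\Dt^{1-\varepsilon_0}$ up to the singular kernel.
\item The last interval $j=n$ is treated separately, by crude boundedness: it contributes $O(\Dt)$.
\end{itemize}
No differentiability of $F$ is used, only boundedness in $L^2(\Omega_W,H)$. Hence losing $\varepsilon_0$ is unavoidable here, and that is consistent with the stated rate.

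For \eqref{bound4}, apply the Itô isometry to reduce the square of the norm to
\[
\sum_{j}\int_{t_{j-1}}^{t_j}\mathbb E_W\big\|(S(t_n-\tau)-S^{n-j+1}_{h,\Dt})G(u(\tau))\big\|_{\L_0^2}^2\,d\tau .
\]
Here the integrand has regularity: by \eqref{linearofG} with $s=\tfrac12$ and \eqref{uregularity}, $\|A^{1/2}G(u(\tau))\|_{L^2(\Omega_W,\L_0^2)}\le c$. Then use \eqref{d-operatoresti-positive} columnwise, i.e. on each $G(u)Q^{1/2}\phi_k$, with $\mu=1$ and $\rho=2-\varepsilon_0$, plus the analogous semigroup time-difference estimate. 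The resulting kernel $(t_n-\tau)^{-(1-\varepsilon_0)}$ is square integrable after squaring, since $\int(t_n-\tau)^{-1+\varepsilon_0}d\tau<\infty$. This gives $(h^{2-\varepsilon_0}+\Dt^{1-\varepsilon_0/2})^2$.

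\textbf{Main obstacle.} The delicate step is the time-difference piece in \eqref{bound4}. The mean-square kernel $(t_n-\tau)^{-2\cdot\frac{\rho-\mu}{2}}$ must stay integrable while the target order $\rho=2-\varepsilon_0$ still exceeds the available regularity $\mu=1$, and this is exactly where $\varepsilon_0$ is lost. I would redo this step carefully with the piecewise-constant-in-time operator $S^{n-j+1}_{h,\Dt}$ on $[t_{j-1},t_j)$.
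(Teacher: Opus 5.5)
Your proposal is correct. The differences from the paper lie mostly in what you prove versus what the paper cites.

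\textbf{What the paper does.} It cites \eqref{bound1} and \eqref{bound2} from \cite[Lemmas 5.1, 5.3]{kruse2019randomized} and \eqref{bound4} from \cite[Lemma 5.5]{kruse2014consistency}, and proves only \eqref{bound3}. It splits the argument as $F(u(\tau))=\big(F(u(\tau))-F(u(t_{j-1}))\big)+F(u(t_{j-1}))$. It then applies \eqref{d-operatoresti-positive} with $\rho=2-\varepsilon_0$, $\mu=0$ directly at the off-grid time $t=t_n-\tau$.

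\textbf{Your route for \eqref{bound3}.} You split the operator at $t_{j-1}$ instead. You then rebuild the off-grid bound from the grid-point one via $\|A^{1-\varepsilon_0/2}S(t)\|_{\L(H)}\le ct^{-1+\varepsilon_0/2}$ and $\|A^{-1+\varepsilon_0/2}(I-S(t))\|_{\L(H)}\le ct^{1-\varepsilon_0/2}$. Neither your split nor the paper's is essential, since every piece only uses $\sup_\tau\|F(u(\tau))\|_{L^2(\Omega_W,H)}<\infty$. A few further remarks:
\begin{itemize}
\item Your exponents (kernel $(t_n-\tau)^{-1+\varepsilon_0/2}$, rate $\Dt^{1-\varepsilon_0/2}$) are the precise ones.
\item The separate treatment of $j=n$ is unnecessary, because that kernel is integrable.
\item Your endpoint worry disappears if you let $t\uparrow t_m$ in \eqref{d-operatoresti-positive} and use the strong continuity of $S(\cdot)$.
\end{itemize}

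\textbf{Your route for \eqref{bound4}.} Your self-contained proof is valid. It combines the It\^o isometry, \eqref{d-operatoresti-positive} applied to each $G(u(\tau))Q^{\frac{1}{2}}\phi_k$ with $\mu=1$, $\rho=2-\varepsilon_0$, and the bound $\|A^{\frac{1}{2}}G(u(\tau))\|_{L^2(\Omega_W,\L_0^2)}\le c$ from \eqref{linearofG} with $s=\frac{1}{2}$ and \eqref{uregularity}. This is exactly the device the paper itself uses for $J_{3,2}$ in Lemma \ref{lastlemma2}. It buys a short, elementary argument, but it hinges on the full $\dot H^1$-regularity of $G(u)$. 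With only $\mu<1$ and target order $\rho=1+\mu$, the squared kernel would be $t^{-1}$, which is not integrable, and you would need sharper (square-function type) estimates instead.
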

\begin{proof}
	The estimates \eqref{bound1}, \eqref{bound2}, and \eqref{bound4} are established in \cite[Lemma 5.1]{kruse2019randomized}, \cite[Lemma 5.3]{kruse2019randomized}, and \cite[Lemma 5.5]{kruse2014consistency}, respectively. It remains to prove the estimate in \eqref{bound3}. To this end, we apply the triangle inequality to obtain
	\begin{equation}\label{45estimate}
		\begin{aligned}
			\Big\|\sum_{j=1}^{n}\int_{t_{j-1}}^{t_j}\big(S(t_n-&\tau)-S_{h,\Dt}^{n-j+1}\big)F(u(\tau))d\tau\Big\|_{L^2(\Omega_{W},H)}\\
			&\leq\Big\|\sum_{j=1}^{n}\int_{t_{j-1}}^{t_j}\big(S(t_n-\tau)-S_{h,\Dt}^{n-j+1}\big)\big(F(u(\tau))-F(u(t_{j-1}))\big)d\tau\Big\|_{L^2(\Omega_{W},H)}\\&\quad+\Big\|\sum_{j=1}^{n}\int_{t_{j-1}}^{t_j}\big(S(t_n-\tau)-S_{h,\Dt}^{n-j+1}\big)F(u(t_{j-1}))d\tau\Big\|_{L^2(\Omega_{W},H)}=:J_1+J_2.
		\end{aligned}
	\end{equation}
	We estimate $J_1$ and $J_2$ separately. For  $J_1$, using the smoothing property \eqref{d-operatoresti-positive} with $\rho=2-\varepsilon_0$ and $\mu=0$, the condition \eqref{Fcondition}, and the temporal H{\"o}lder regularity of the mild solution \eqref{holderregularity}, we obtain
	\begin{equation*}
		\begin{aligned}
			J_1&\leq c\big(h^{2-\varepsilon_0}+\Dt^{1-\varepsilon_0}\big)\sum_{j=1}^{n}\int_{t_{j-1}}^{t_j}(t_n-\tau)^{-(1-\varepsilon_0)}\|u(\tau)-u(t_{j-1})\|_{L^2(\Omega_{W},H)}d\tau\\
			&\leq cT^{\frac{1}{2}}(h^{2-\varepsilon_0}+\Dt^{1-\varepsilon_0})\int_{0}^{t_n}(t_n-\tau)^{-(1-\varepsilon_0)}d\tau\leq c(h^{2-\varepsilon_0}+\Dt^{1-\varepsilon_0}),
		\end{aligned}
	\end{equation*}
	where $\varepsilon_0>0$ is arbitrarily small. 
	
	For the second term $J_2$, we again apply \eqref{d-operatoresti-positive} with $\rho=2-\varepsilon_0$ and $\mu=0$, together with \eqref{Fcondition} and the regularity bound \eqref{uregularity}, to deduce
	\begin{equation*}
		\begin{aligned}
			J_2&\leq c\big(h^{2-\varepsilon_0}+\Dt^{1-\varepsilon_0}\big)\sum_{j=1}^{n}\int_{t_{j-1}}^{t_j}(t_n-\tau)^{-(1-\varepsilon_0)}\big(1+\|u(t_{j-1})\|_{L^2(\Omega_{W},H)}\big)d\tau\\
			&\leq c(h^{2-\varepsilon_0}+\Dt^{1-\varepsilon_0})\int_{0}^{t_n}(t_n-\tau)^{-(1-\varepsilon_0)}d\tau\leq c(h^{2-\varepsilon_0}+\Dt^{1-\varepsilon_0}).
		\end{aligned}
	\end{equation*}
	Combining the above estimates for $J_1$ and $J_2$, and recalling \eqref{45estimate}, we conclude that the bound in \eqref{bound3} holds.
\end{proof}
\begin{lemma}\label{lastlemma1}
	Let $u_0\in L^p(\Omega_{W},\dot{H}^{2})$, $p\ge2$, be an $\mathcal{F}^W_0$-measurable random variable. Then there exists a constant $c>0$, independent of $h$ and $\Dt$, such that
	\begin{equation}\label{martigalederivative}
		\max_{n\in\{1,\dots,N\}}\Big\|\sum_{j=1}^{n}S_{h,\Dt}^{n-j+1}\int_{t_{j-1}}^{t_j}\big(F(u(\tau))-F(t_{j-1}+\xi_j\Dt)\big)d\tau\Big\|_{L^2(\Omega,H)}\leq c\Dt.
	\end{equation}
\end{lemma}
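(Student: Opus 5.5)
The estimate \eqref{martigalederivative} is the standard martingale argument for randomized quadrature, as in \cite[Lemma 5.7]{kruse2019randomized}. Here the integrand is $F(u(t_{j-1}+\xi_j\Dt))$. For each $j$, define
\begin{equation*}
Z_j:=S_{h,\Dt}^{n-j+1}\Big(\int_{t_{j-1}}^{t_j}F(u(\tau))\,d\tau-\Dt F(u(t_{j-1}+\xi_j\Dt))\Big).
\end{equation*}
With $n$ fixed, the sum $\sum_{j=1}^m Z_j$, $m=1,\dots,n$, is a discrete-time martingale with respect to the filtration $\mathcal{G}_m:=\mathcal{F}^W_T\times\mathcal{F}^{\xi}_m$.

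The key observation uses independence. The random variables $\xi_j$ are independent of $\Omega_W$ and of $\xi_1,\dots,\xi_{j-1}$. Hence, for fixed $\omega_W$,
\begin{equation*}
\mathbb{E}_\xi\big[\Dt F(u(t_{j-1}+\xi_j\Dt))\,\big|\,\mathcal{G}_{j-1}\big]=\Dt\int_0^1F(u(t_{j-1}+s\Dt))\,ds=\int_{t_{j-1}}^{t_j}F(u(\tau))\,d\tau .
\end{equation*}
So $\mathbb{E}_\xi[Z_j\,|\,\mathcal{G}_{j-1}]=0$, because $S_{h,\Dt}^{n-j+1}$ is deterministic and linear. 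It is also important that $u$ depends only on $\omega_W$: no adaptedness of $u$ to $\mathcal{F}^W_t$ is needed, since the conditioning is over $\xi$ only.

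With this structure, the cross terms vanish in the $H$-valued Pythagorean identity:
\begin{equation*}
\Big\|\sum_{j=1}^nZ_j\Big\|^2_{L^2(\Omega,H)}=\sum_{j=1}^n\|Z_j\|^2_{L^2(\Omega,H)} .
\end{equation*}
To see this, for $i<j$ compute $\mathbb{E}[(Z_i,Z_j)_H]$ by conditioning on $\mathcal{G}_{j-1}$. Here $Z_i$ is $\mathcal{G}_{j-1}$-measurable, so the inner product becomes $(Z_i,\mathbb{E}[Z_j|\mathcal{G}_{j-1}])_H=0$. This orthogonality is exactly what gains half an order over the naive triangle inequality, which would give only $\Dt^{1/2}$.

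Each term is then estimated directly. By \eqref{operatoruniformbound},
\begin{equation*}
\|Z_j\|_{L^2(\Omega,H)}\le\Big\|\int_{t_{j-1}}^{t_j}\big(F(u(\tau))-F(u(t_{j-1}+\xi_j\Dt))\big)\,d\tau\Big\|_{L^2(\Omega,H)} .
\end{equation*}
By \eqref{Fcondition} this is bounded by $\int_{t_{j-1}}^{t_j}\|u(\tau)-u(t_{j-1}+\xi_j\Dt)\|_{L^2(\Omega,H)}\,d\tau$. Condition on $\xi_j$ and apply \eqref{holderregularity} with $\gamma=0$; since $|\tau-t_{j-1}-\xi_j\Dt|\le\Dt$, this gives the bound $c\Dt\cdot\Dt^{1/2}=c\Dt^{3/2}$.

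Summing, $\|\sum_j Z_j\|^2_{L^2(\Omega,H)}\le cN\Dt^{3}=cT\Dt^2$. Taking square roots gives $c\Dt$, and the constant is uniform in $n$, $h$ and $\Dt$.

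The main obstacle is the rigorous justification of the martingale and orthogonality step on the product space. One has to verify that $Z_j$ is $\mathcal{G}_j$-measurable, which holds because $\int_{t_{j-1}}^{t_j}F(u(\tau))\,d\tau$ is $\mathcal{F}^W_T$-measurable. One also needs $Z_j$ to be square integrable, which follows from \eqref{uregularity} and the linear growth of $F$. Finally, the conditional expectation identity must be justified through Fubini on $\Omega_W\times\Omega_\xi$, using the measurability of $(s,\omega_W)\mapsto u(t_{j-1}+s\Dt)$, which holds since $u$ is a continuous (or at least predictable) process.
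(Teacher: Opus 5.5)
Your argument is correct and follows the route the paper takes by deferring to \cite[Lemma 5.7]{kruse2019randomized}. There, too, $m\mapsto\sum_{j=1}^m Z_j$ is treated as a discrete-time martingale with respect to $\mathcal{F}^W_T\times\mathcal{F}^{\xi}_m$; for $p=2$ the martingale inequality \cite[Proposition 2.2]{kruse2019randomized} reduces to your orthogonality identity, and each increment is then bounded via \eqref{Fcondition} and the $\frac{1}{2}$-H\"older regularity \eqref{holderregularity} to give $c\Dt^{3/2}$ per step and $c\Dt$ overall. You also correctly read the integrand $F(t_{j-1}+\xi_j\Dt)$ in the statement as $F(u(t_{j-1}+\xi_j\Dt))$.
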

\begin{proof}
	By following the same lines of argument as in \cite[Lemma 5.7]{kruse2019randomized}, we can derive the estimate \eqref{martigalederivative}. The key ingredient of the proof is the use of a discrete-time martingale inequality \cite[Proposition 2.2]{kruse2019randomized}, which allows to avoid relying on a Taylor expansion of the nonlinear drift term $F(\cdot)$. It completes the proof.
\end{proof}
\begin{lemma}\label{lastlemma2}
	Let $u_0\in L^p(\Omega_{W},\dot{H}^{2})$, $p\ge2$, be an $\mathcal{F}^W_0$-measurable random variable. Then there exists a constant $c>0$, independent of $h$ and $\Dt$, and an arbitrarily small number $\varepsilon_0>0$ such that
	\begin{equation*}
		\max_{n\in\{1,\dots,N\}}\Big\|\Dt\sum_{j=1}^{n}S_{h,\Dt}^{n-j+1}\big(F(u(t_{j-1}+\xi_j\Dt))-F(u(t_{j-1},\xi_j))\big)\Big\|_{L^2(\Omega,H)}\leq c(h^{2-\varepsilon_0}+\Dt^{1-\varepsilon_0}).
	\end{equation*}
\end{lemma}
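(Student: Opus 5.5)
We bound the sum by the triangle inequality, use $\|S_{h,\Dt}^{n-j+1}\|_{\L(H)}\le 1$ from \eqref{operatoruniformbound} and the Lipschitz property \eqref{Fcondition}. The task then reduces to the local estimate
\begin{equation*}
\big\|u(t_{j-1}+\xi_j\Dt)-u(t_{j-1},\xi_j)\big\|_{L^2(\Omega,H)}\le c\big(h^{2-\varepsilon_0}+\Dt^{1-\varepsilon_0}\big),\qquad j=1,\dots,N.
\end{equation*}
Summing $\Dt$ times this over $j\le n\le N$ gives the claim with a factor $T$. No martingale structure is needed here, because the $\Dt$ prefactor already absorbs one order.

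For the local estimate we condition on $\xi_j=\theta\in[0,1]$, which is independent of $W$. We then bound, uniformly in $\theta$, the $L^2(\Omega_W,H)$-norm of the difference between $u(t_{j-1}+\theta\Dt)$ and $S_{h,\theta\Dt}(u(t_{j-1})+\theta\Dt F(u(t_{j-1}))+G(u(t_{j-1}))\Delta W)$. Writing the mild formulation on $[t_{j-1},t_{j-1}+\theta\Dt]$ with $\delta:=\theta\Dt$, we get
\begin{equation*}
u(t_{j-1}+\delta)=S(\delta)u(t_{j-1})+\int_{t_{j-1}}^{t_{j-1}+\delta}S(t_{j-1}+\delta-\tau)F(u(\tau))d\tau+\int_{t_{j-1}}^{t_{j-1}+\delta}S(t_{j-1}+\delta-\tau)G(u(\tau))dW(\tau).
\end{equation*}
The difference then splits into three pieces.

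\emph{Linear piece.} $(S(\delta)-S_{h,\delta})u(t_{j-1})$ is handled by the one-step version of \eqref{d-operatoresti-positive} with $\mu=\rho=2-\varepsilon_0$ (no singularity in time). Combined with \eqref{uregularity} for $\gamma=1-\varepsilon_0$, this gives $c(h^{2-\varepsilon_0}+\delta^{1-\varepsilon_0})$.

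\emph{Drift piece.} Both drift integrals are bounded crudely by $c\delta\sup_\tau(1+\|u(\tau)\|_{L^2(\Omega_W,H)})\le c\Dt$, using \eqref{Fcondition} and the uniform bounds on $S$ and $S_{h,\delta}$.

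\emph{Stochastic piece.} We must compare $\int S(t_{j-1}+\delta-\tau)G(u(\tau))dW(\tau)$ with $S_{h,\delta}G(u(t_{j-1}))\Delta W^{j-1}_{\theta}$, which we split further:
\begin{itemize}
\item[(i)] For $\int (S(t_{j-1}+\delta-\tau)-S_{h,\delta})G(u(\tau))dW$, the It\^o isometry together with \eqref{d-operatoresti-positive} (with $\mu=1$, using \eqref{linearofG} at $s=\tfrac12$ and \eqref{uregularity}) gives $c(h+\delta^{1/2})\delta^{1/2}$.
\item[(ii)] For $S_{h,\delta}\int (G(u(\tau))-G(u(t_{j-1})))dW$, the It\^o isometry together with \eqref{lipofG} and \eqref{holderregularity} gives $c\,\delta^{1/2}\delta^{1/2}=c\delta$.
\end{itemize}

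The main obstacle is term (i). The naive bound $(h+\delta^{1/2})\delta^{1/2}$ produces a mixed term $h\Dt^{1/2}\le c(h^2+\Dt)$ by Young's inequality, which is acceptable. The catch is that (i) relies on the error estimate between $S(\sigma)$ and the single-step $S_{h,\delta}$ for $\sigma\in(0,\delta]$ rather than at the grid point. I would verify this by splitting $S(\sigma)-S_{h,\delta}=(S(\sigma)-I)+(I-S_{h,\delta})$ on $\dot H^1$ data. Each part is bounded by $c(h+\delta^{1/2})\|A^{1/2}v\|$: the first by the standard estimate $\|(S(\sigma)-I)v\|\le c\sigma^{1/2}\|A^{1/2}v\|$, the second by the Ritz-projection resolvent estimate.

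Collecting all pieces gives a bound uniform in $\theta$. Integrating over $\xi_j$ via $\mathbb E_\xi$ then yields the local estimate, and hence the lemma.
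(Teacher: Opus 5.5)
Your proposal is correct and follows essentially the same route as the paper: the same reduction via \eqref{operatoruniformbound} and the Lipschitz bound to the local error $\|u(t_{j-1}+\xi_j\Dt)-u(t_{j-1},\xi_j)\|_{L^2(\Omega,H)}$, and the same mild-formula split into linear, drift and stochastic parts, with the linear part handled by \eqref{d-operatoresti-positive} at $\rho=\mu=2-\varepsilon_0$ and the drift part bounded crudely by $c\Dt$. The only difference is in the stochastic part: the paper pairs the operator difference with $G(u(t_{j-1}))$, uses $\rho=2-\varepsilon_0$, $\mu=1$ and integrates the resulting weak singularity, whereas you pair it with $G(u(\tau))$, take $\rho=\mu=1$ and absorb $h\Dt^{1/2}$ by Young's inequality, which is an equally valid variant.
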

\begin{proof}
	Applying \eqref{operatoruniformbound} and the Lipschitz continuity of $F(\cdot)$ from \eqref{Fcondition}, we obtain
	\begin{equation}\label{formula24}
		\begin{aligned}
			\max_{n\in\{1,\dots,N\}}&\Big\|\Dt\sum_{j=1}^{n}S_{h,\Dt}^{n-j+1}\big(F(u(t_{j-1}+\xi_j\Dt))-F(u(t_{j-1},\xi_j))\big)\Big\|_{L^2(\Omega,H)}\\&
			\leq c\Dt\sum_{j=1}^{N}\big\|u(t_{j-1}+\xi_j\Dt)-u(t_{j-1},\xi_j)\big\|_{L^2(\Omega,H)}.
		\end{aligned}
	\end{equation}
	To estimate the $\big\|u(t_{j-1}+\xi_j\Dt)-u(t_{j-1},\xi_j)\big\|_{L^2(\Omega,H)}$, we insert  the variation-of-constants formula
	\begin{equation*}
		\begin{aligned}
			u(t_{j-1}+\xi_j\Dt)=&u(t_{j-1})S(\xi_j\Dt)+\int_{t_{j-1}}^{t_{j-1}+\xi_j\Dt}S(t_{j-1}+\xi_j\Dt-\tau)F(u(\tau))d\tau\\&+\int_{t_{j-1}}^{t_{j-1}+\xi_j\Dt}S(t_{j-1}+\xi_j\Dt-\tau)G(u(\tau))dW(\tau)
		\end{aligned}
	\end{equation*}
	and \eqref{PSI}, i.e.,
	\bex
	u(t_{j-1},\xi_j)= S_{h,\xi_j\Dt} u(t_{j-1})+\xi_j \Dt S_{h,\xi_j\Dt} F(u(t_{j-1}))+S_{h,\xi_j\Dt} G(u(t_{j-1}))\Delta W^{j-1}_{\xi_j}
	\eex
	into  \eqref{formula24} to derive
	\begin{equation}\label{4.15}
		\begin{aligned}
			\big\|&u(t_{j-1}+\xi_j\Dt)-u(t_{j-1},\xi_j)\big\|_{L^2(\Omega,H)}\leq \big\|\big(S(\xi_j\Dt)-S_{h,\xi_j\Dt}\big)u(t_{j-1})\big\|_{L^2(\Omega,H)}\\&+\Big\|\int_{t_{j-1}}^{t_{j-1}+\xi_j\Dt}\big(S(t_{j-1}+\xi_j\Dt-\tau)F(u(\tau))-S_{h,\xi_j\Dt}F(u(t_{j-1}))\big)d\tau\Big\|_{L^2(\Omega,H)}\\
			&+\Big\|\int_{t_{j-1}}^{t_{j-1}+\xi_j\Dt}\big(S(t_{j-1}+\xi_j\Dt-\tau)G(u(\tau))-S_{h,\xi_j\Dt}G(u(t_{j-1}))\big)dW(\tau)\Big\|_{L^2(\Omega,H)}=:J_1+J_2+J_3.
		\end{aligned}
	\end{equation}
	We now estimate each term $J_1$, $J_2$, $J_3$ separately. For the first term $J_1$, employing \eqref{uregularity}, and \eqref{d-operatoresti-positive} with $\rho=\mu=2-\varepsilon_0$, gives
	\begin{equation}\label{J1estimate}
		\begin{aligned}
			J_1&=\Big(\mathbb{E_\xi}\Big[\big\|\big(S(\xi_j\Dt)-S_{h,\xi_j\Dt}\big)u(t_{j-1})\big\|^2_{L^2(\Omega_{W},H)}\Big]\Big)^{\frac{1}{2}}=\Big(\frac{1}{\Dt}\int_{0}^{\Dt}\big\|\big(S(\theta)-S_{h,\theta}\big)u(t_{j-1})\big\|^2_{L^2(\Omega_{W},H)}d\theta\Big)^{\frac{1}{2}}\\
			&\leq c(h^{2-\varepsilon_0}+\Dt^{1-\varepsilon_0})\Big(\frac{1}{\Dt}\int_{0}^{\Dt}\big\|A^{1-\varepsilon_0}u(t_{j-1})\|^2_{L^2(\Omega_{W},H)}d\theta\Big)^{\frac{1}{2}}\leq c(h^{2-\varepsilon_0}+\Dt^{1-\varepsilon_0}),
		\end{aligned}
	\end{equation}
	where $\varepsilon_0>0$ is an arbitrarily small number.
	
	The term $J_2$ can be bounded by using $\|S(\cdot)\|_{\L(H)}\leq c$, the linear growth condition of $F(\cdot)$ from \eqref{Fcondition}, the regularity result \eqref{uregularity}, and $\xi_j\sim\mathcal{U}(0,1)$:
	\begin{equation}\label{J2estimate}
		\begin{aligned}
			J_2=&\Big\|\int_{t_{j-1}}^{t_{j-1}+\xi_j\Dt}\big(S(t_{j-1}+\xi_j\Dt-\tau)F(u(\tau))-S_{h,\xi_j\Dt}F(u(t_{j-1}))\big)d\tau\Big\|_{L^2(\Omega,H)}\\
			&\leq \Big(\mathbb{E_\xi}\Big[\Big(\int_{t_{j-1}}^{t_{j-1}+\xi_j\Dt}\big(\|F(u(\tau))\|_{L^2(\Omega_{W},H)}+\|F(u(t_{j-1}))\|_{L^2(\Omega_W,H)}\big)d\tau\Big)^2\Big]\Big)^{\frac{1}{2}} \\&
			\leq c\big(1+\sup_{0\leq t\leq T}\|u(t)\|_{L^2(\Omega_{W},H)}\big)\Dt\leq c\Dt.
		\end{aligned}
	\end{equation}
	To estimate $J_3$, we first add and subtract a suitable term, yielding 
	\begin{equation*}
		\begin{aligned}
			J_3=&\Big\|\int_{t_{j-1}}^{t_{j-1}+\xi_j\Dt}\big(S(t_{j-1}+\xi_j\Dt-\tau)G(u(\tau))-S_{h,\xi_j\Dt}G(u(t_{j-1}))\big)dW(\tau)\Big\|_{L^2(\Omega,H)}\\
			&\leq \Big\|\int_{t_{j-1}}^{t_{j-1}+\xi_j\Dt}S(t_{j-1}+\xi_j\Dt-\tau)\big(G(u(\tau))-G(u(t_{j-1}))\big)dW(\tau)\Big\|_{L^2(\Omega,H)}\\
			&\quad+\Big\|\int_{t_{j-1}}^{t_{j-1}+\xi_j\Dt}\big(S(t_{j-1}+\xi_j\Dt-\tau)-S_{h,\xi_j\Dt}\big)G(u(t_{j-1}))dW(\tau)\Big\|_{L^2(\Omega,H)}=:J_{3,1}+J_{3,2}.
		\end{aligned}
	\end{equation*}
	We now estimate $J_{3,1}$ and $J_{3,2}$ individually. For $J_{3,1}$, applying Burkholder-Davis-Gundy inequality, $\|S(\cdot)\|_{\L(H)}\leq c$, \eqref{lipofG},  \eqref{holderregularity}, and $\xi_j\sim\mathcal{U}(0,1)$, yields
	\begin{equation*}
		\begin{aligned}
			J_{3,1}&\leq c\Big(\mathbb{E_\xi}\Big[\int_{t_{j-1}}^{t_{j-1}+\xi_j\Dt}\big\|G(u(\tau))-G(u(t_{j-1}))\big\|^2_{L^2(\Omega_W,\L_0^2)}d\tau\Big]\Big)^{\frac{1}{2}}\\
			&\leq c\Big(\mathbb{E_\xi}\Big[\int_{t_{j-1}}^{t_{j-1}+\xi_j\Dt}\big\|u(\tau)-u(t_{j-1})\big\|^2_{L^2(\Omega_W,H)}d\tau\Big]\Big)^{\frac{1}{2}}\leq c\Dt.
		\end{aligned}
	\end{equation*}
	For $J_{3,2}$, we again use Burkholder-Davis-Gundy inequality, together with the operator estimate \eqref{d-operatoresti-positive} for $\rho=2-\varepsilon_0$ and $\mu=1$, the growth bound \eqref{linearofG}, the regularity result \eqref{uregularity}, and $\xi_j\sim\mathcal{U}(0,1)$, to obtain
	\begin{equation*}
		\begin{aligned}
			J_{3,2}&\leq  c\Big(\mathbb{E_\xi}\Big[\int_{t_{j-1}}^{t_{j-1}+\xi_j\Dt}\big\|\big(S(t_{j-1}+\xi_j\Dt-\tau)-S_{h,\xi_j\Dt}\big)G(u(t_{j-1}))\big\|^2_{L^2(\Omega_W,\L_0^2)}d\tau\Big]\Big)^{\frac{1}{2}}\\&
			\leq c\big(h^{2-\varepsilon_0}+\Dt^{1-\varepsilon_0}\big)\Big(\mathbb{E_\xi}\Big[\int_{t_{j-1}}^{t_{j-1}+\xi_j\Dt}\big(t_{j-1}+\xi_j\Dt-\tau\big)^{\varepsilon_0-1}\big\|A^{\frac{1}{2}} G(u(t_{j-1}))\big\|^2_{L^2(\Omega_{W},\L_0^2)}d\tau\Big]\Big)^{\frac{1}{2}}
			\\&\leq c\big(h^{2-\varepsilon_0}+\Dt^{1-\varepsilon_0}\big)\big(1+\sup_{0\leq t\leq T}\|u(t)\|_{L^2(\Omega_{W},\dot{H}^1)}\big)\Big(\mathbb{E_\xi}\Big[\int_{t_{j-1}}^{t_{j-1}+\xi_j\Dt}\big(t_{j-1}+\xi_j\Dt-\tau\big)^{\varepsilon_0-1}d\tau\Big]\Big)^{\frac{1}{2}}\\
			&\leq c\big(h^{2-\varepsilon_0}+\Dt^{1-\varepsilon_0}\big),
		\end{aligned}
	\end{equation*}
	where $\varepsilon_0$ is an infinitesimal positive number.
	
	Combining the bounds for $J_{3,1}$ and $J_{3,2}$, we arrive at
	\begin{equation}\label{J3estimate}
		J_3\leq c\big(h^{2-\varepsilon_0}+\Dt^{1-\varepsilon_0}\big).
	\end{equation}
	Collecting the estimates \eqref{formula24}, \eqref{4.15}, \eqref{J1estimate}, \eqref{J2estimate}, and \eqref{J3estimate} completes the proof.
\end{proof}

\section*{}
\medskip
\noindent
{\bf CRediT authorship contribution statement}

\vspace{2mm}

{{\bf Xiao Qi:} Writing-original draft, Data curation, Formal analysis, Investigation, Methodology, Validation. {\bf Yue Wu:} Conceptualization, Formal analysis, Investigation, Methodology, Resources. 
{\bf Yubin Yan:} Conceptualization, Data curation, Formal analysis, Investigation, Methodology, Project administration, Resources, Supervision, Writing-review \& editing.}

\medskip
\noindent
{\bf Declaration of competing interest} 

\vspace{4mm}

The authors declare the following financial interests/personal relationships which may be considered as potential
competing interests: Xiao Qi is partially supported by the China Scholarship Council and the Research Fund of Jianghan University under Grant No. 2024JCYJ04.  Other authors have no known competing financial interests or personal relationships that could have appeared to influence the work reported in this paper.

\medskip
\noindent
{\bf Data availability} 

\vspace{4mm}

No data was used for the research described in the article.

\bibliography{sn-bibliography}

\end{document}